\newtheorem{thm}{Theorem}
\newtheorem{prop}[thm]{Proposition}
\newtheorem{cor}[thm]{Corollary}
\newtheorem*{rem}{Remark}
\newcommand{\Q}{{\mathbb Q}} 
\newcommand{\fp}{{\mathbb F}_p} 
\newcommand{\fq}{{\mathbb F}_q}
\newcommand{\fqtwo}{{\mathbb F}_{q^2}}
\newcommand{\fqn}{{\mathbb F}_{q^n}} 
\newcommand{\gal}{{\operatorname{Gal}}}
\newcommand{\frob}{{\operatorname{Frob}}}
\newif\ifsage
\title[Prime and Möbius correlations for very short intervals]{Prime and Möbius correlations for very short intervals
  in $\fq[x]$.}
\author{P\"ar Kurlberg, Lior Rosenzweig}
\urladdr{www.math.kth.se/\~{ }kurlberg}
\address{Department of Mathematics, KTH Royal Institute of Technology,
SE-100 44 Stockholm, Sweden}
\email{kurlberg@math.kth.se}
\address{Unit of Mathematics, Afeka Tel Aviv College of Engineering, Mivtza Kadesh 38, Tel Aviv, Israel}
\email{liorr@afeka.ac.il}
\thanks{The authors were partially supported by grants from 
the G\"oran Gustafsson Foundation for Research in Natural Sciences and
Medicine,
and the
Swedish Research Council (621-2011-5498, 2016-03701).}
\date{July 1, 2020}
\begin{document}
\begin{abstract}
  We investigate function field analogs of the distribution of primes,
  and prime $k$-tuples, in ``very short intervals'' of the form
  $I(f) := \{ f(x) + a : a \in \fp \}$ for $f(x) \in \fp[x]$ and $p$
  prime, as well as cancellation in sums of function field analogs of
  the Möbius $\mu$ function and its correlations (similar to sums
  appearing in Chowla's conjecture).
  For generic $f$, i.e., for $f$ a Morse polynomial, the
  error terms are roughly of size $O(\sqrt{p})$ (with typical main
  terms of order $p$).
  For non-generic $f$ we prove that independence still holds for
  ``generic'' set of shifts.  We can also exhibit examples
   for which there is no cancellation at all in 
  Möbius/Chowla type sums (in fact, it turns out that (square root)
  cancellation in Möbius sums is {\em equivalent} to (square root)
  cancellation in Chowla type sums),  as well as intervals where the
  heuristic ``primes are 
  independent'' fails badly.
  %

  The results are deduced from a general theorem on correlations of
  arithmetic class functions; these include characteristic functions
  on primes, the Möbius $\mu$ function, and divisor functions (e.g.,
  function field analogs of the Titchmarsh divisor problem can be
  treated.)
  We also prove analogous, but slightly weaker, results in the more
  delicate fixed characteristic setting, i.e., for $f(x) \in \fq[x]$
  and intervals of the form $f(x) +a$ for $a \in \fq$, where $p$ is
  fixed and $q=p^{l}$ grows.
\end{abstract}

\maketitle

%

\section{Introduction}

Given a prime $p$, let $\fp$ denote the finite field with $p$ elements,
and let
$$
M_{d} = M_{d}(\fp) := \{ f \in \fp[x] : \text{$f$ is monic and $\deg(f)=d$}  \}
$$
denote the set of monic polynomials of degree $d$.  
Gauss gave an exact formula for  the number of prime, or irreducible,
polynomials in 
$M_{d}(\fp)$, namely
$$
| \{ f \in M_{d}(\fp) : \text{ $f$ is prime } \} |
=
\frac{1}{d} \sum_{e|d} \mu(d/e) p^{e}
=
\frac{p^{d} }{d} \cdot (1 + O(p^{-d/2}));
$$
since $|M_{d}(\fp)| = p^{d}$ this can be viewed as a function
field analog of the Prime Number Theorem as $p^d$ tends to infinity, with
$1/d$ playing the role of the "prime density", with square root
cancellation in the error term.
In this paper, we shall be concerned with  ``short interval''
analogs of Gauss' result, 
various generalizations to prime $k$-tuples, square root
cancellation in Möbius $\mu$ sums, as well as
sums appearing in Chowla's conjecture (these will be described
in detail below.)
Given $f \in \fp[x]$ we define a {\em very short interval} around $f$
as the set
$$
I(f) := 
\{ f(x) + a : a \in \fp \};
$$
clearly $|I(f)|=p$.  In order to avoid trivialities we will from now
on assume that $\deg(f) \geq 2$.  Further, as we are mainly interested
in the large $p$ limit, we will 
assume that $p>d$ unless otherwise noted
(cf. Section~\ref{sec:large-q} for results when $p$ is fixed but
$q=p^{l}$ grows.)

\subsection{Results for generic intervals}
\label{sec:results-gener-interv}
An element $f \in M_{d}(\fp)$ is said to be a {\em Morse polynomial}
provided that $f$ has $d-1$ distinct critical values, i.e.,  $|\{
f(\xi) : f'(\xi) = 0 \}| = d-1$.  A basic fact
(cf. Section~\ref{sec:gener-morse-polyn}) is that $f$ is Morse for a
{\em generic} choice of coefficients; in particular, given
$f(x) \in M_{d}(\fp)$, the polynomials $f(x) + sx$ will be Morse for
all but $O_{d}(1)$ elements $s \in \fp$. Our first result is that an
analog of the Hardy-Littlewood prime  $k$-tuple conjecture holds
for {\em almost all} very short intervals, namely the ones
``centered'' at Morse polynomials.
For simplicity we state
  the result only for simultaneous prime specialization, but in fact
  any set of $k$ factorization patterns can be treated,
  cf. Section~\ref{sec:corr-class-funct}.

\begin{thm}
  \label{thm:prime-independence}
Assume that $f \in M_{d}(\fp)$ is a Morse polynomial, and  $d \ge 2$. We
then have
\begin{equation}
  \label{eq:primes}
  |\{ g \in I(f) : \text{ $g$ is prime }\}|
  = \frac{p}{d} + O_{d}(\sqrt{p})
\end{equation}
  Moreover, given $k$ distinct shifts $h_{1}, h_{2}, \ldots, h_{k}
  \in \fp$, we have
\begin{multline}
\label{eq:prime-k-tuple}
|\{ g \in I(f) : \text{ $g+h_{1}, g+h_{2}, \ldots, g+h_{k}$ are prime }\}|
 \\ = \frac{p}{d^{k}} + O_{d,k}(\sqrt{p})
\end{multline}
\end{thm}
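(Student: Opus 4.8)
The plan is to interpret the counts as averages of products of von Mangoldt-type class functions over the fibers of the map $x \mapsto f(x)$, and then to invoke the Lang–Weil / Chebotarev-type equidistribution for the Galois group attached to $f(x) - t$ over $\fp(t)$. Concretely, fix a shift $h \in \fp$ and consider the polynomial $F_h(x,t) := f(x) + h - t \in \fp[t][x]$, viewed as a degree-$d$ polynomial in $x$ over the function field $\fp(t)$. Since $f$ is Morse, $f(x)+h-t$ is separable with Galois group the full symmetric group $S_d$ over $\fp(t)$ (this is the standard consequence of the Morse condition: the branch cycles at the $d-1$ distinct finite critical values are transpositions, and together with the $d$-cycle at infinity they generate $S_d$), and moreover $\fp$ is algebraically closed in the splitting field, so the geometric and arithmetic monodromy coincide. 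Whether $f(x) + a + h$ is irreducible over $\fp$ — i.e.\ whether $g + h$ is irreducible when $g = f(x)+a$ — is governed, via the ``function field factorization = cycle type of Frobenius'' dictionary, by the cycle type of $\frob_{a}$ acting on the roots of $f(x)+h-t$ specialized at $t = a$; irreducibility corresponds to $\frob_a$ being a $d$-cycle.

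First I would set up the case $k=1$ (equation \eqref{eq:primes}). The number of $a \in \fp$ with $f(x)+a$ irreducible equals the number of $a$ for which $\frob_a$ is a $d$-cycle in $S_d$, up to contributions from the ramified primes (the $O_d(1)$ critical values) and primes where $f(x)+a$ is inseparable — all of which are $O_d(1)$ in number and hence absorbed into the error term. By the explicit Chebotarev theorem for function fields (an effective Lang–Weil bound, e.g.\ as in work of S.~D.~Cohen or the Kosters/Bary-Soroker formulation), the number of $a \in \fp$ with prescribed cycle type $C$ is $\frac{|C|}{|S_d|} p + O_d(\sqrt{p})$, where $|C|/|S_d|$ is the proportion of that conjugacy class. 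Since the $d$-cycles form a class of density $1/d$ in $S_d$, this yields $p/d + O_d(\sqrt p)$. For the $k$-tuple statement \eqref{eq:prime-k-tuple}, the key step is to understand the Galois group of the \emph{product} $\prod_{i=1}^k (f(x) + h_i - t)$ over $\fp(t)$, or equivalently the fiber product of the $k$ curves $y_i^{} : f(x_i) + h_i = t$. One shows this Galois group is the full wreath-type product $S_d^k$ (equivalently, the $k$ factors are ``linearly disjoint'' over $\fp(t)$): because the $h_i$ are distinct and $f$ is Morse, the branch loci of the $k$ factors are disjoint (critical values of $f(x)+h_i$ are those of $f$ shifted by $h_i$), so the inertia generators of one factor act trivially on the others, and a Goursat-type argument together with simplicity considerations for $S_d$ (for $d \ge 5$; small $d$ handled directly) forces the Galois group to be all of $S_d^k$. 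Granting this, Chebotarev gives that the density of $a \in \fp$ for which $\frob_a$ is simultaneously a $d$-cycle in all $k$ coordinates is $(1/d)^k$, with the same $O_{d,k}(\sqrt p)$ error.

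The main obstacle is establishing the linear disjointness of the $k$ shifted families — i.e.\ that the Galois group of $\prod_i (f(x)+h_i-t)$ over $\fp(t)$ is all of $S_d^k$ rather than a proper subdirect product. The disjointness of branch loci is clean, but one must rule out ``accidental'' identifications between the splitting fields of $f(x)+h_i-t$ and $f(x)+h_j-t$ for $i \ne j$; here one uses that any isomorphism between these fields over $\fp(t)$ would have to match up branch points, which is impossible since $\mathrm{Crit}(f) + h_i$ and $\mathrm{Crit}(f)+h_j$ are disjoint for $h_i \ne h_j$, combined with the fact that the only normal subgroups of $S_d$ are $1, A_d, S_d$, so a subdirect subgroup of $S_d^k$ projecting onto each factor must, by Goursat, be a product unless paired factors have isomorphic simple quotients identified by an outer-or-inner automorphism — which the branch-point obstruction prevents. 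The rest — translating function field factorization to cycle types, discarding the $O_{d,k}(1)$ bad specializations, and quoting the effective function field Chebotarev bound — is routine. I would in fact expect the paper to deduce both statements as instances of the promised ``general theorem on correlations of arithmetic class functions,'' with the von Mangoldt / prime-indicator function playing the role of the class function and the Morse hypothesis feeding in exactly the $S_d$-monodromy needed to make the correlation sum equidistribute.
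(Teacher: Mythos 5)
Your overall strategy — identify irreducibility with the Frobenius being a $d$-cycle, use the Morse hypothesis to get arithmetic and geometric monodromy $S_d$, show the compositum for $k$ shifts has Galois group $S_d^k$, and apply effective Chebotarev over $\fp(t)$ — is exactly the route the paper takes (via its general class-function Theorem~\ref{thm:class-function-correlations} and Propositions~\ref{prop:Morse-Sn}, \ref{prop:linear-disjoint-morse}, \ref{prop:all-possible-densities}, \ref{prop:uncorrelated-class-average}).

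However, your argument for $G^k = S_d^k$ contains a genuine gap. You assert that ``because the $h_i$ are distinct and $f$ is Morse, the branch loci of the $k$ factors are disjoint.'' This is false. The branch locus of $f(x)+h_i-t$ in the $t$-line is the translated critical-value set $R_f + h_i$, where $R_f$ is the set of $d-1$ distinct critical values of $f$. For $d\ge 3$ the difference set $R_f-R_f$ contains nonzero elements, and whenever $h_i - h_j$ lies in $B(f) := (R_f-R_f)\setminus\{0\}$ the two branch loci overlap — and such pairs of distinct shifts certainly exist in $\fp$ for large $p$. In that situation the inertia generators do \emph{not} live in a single factor, so the Goursat/simplicity argument you sketch does not apply as stated. (Your reasoning is essentially the paper's Proposition~\ref{prop:linear-disjoint-fields}, which is only valid when $h_i-h_j\notin B(f)$ for all $i\neq j$, and is precisely how the paper handles the \emph{non-Morse} case with generic shifts in Theorem~\ref{thm:non-generic-correlations}.) The crucial input for the Morse case — allowing arbitrary distinct shifts, including those with $h_i-h_j\in B(f)$ — is Proposition~\ref{prop:linear-disjoint-morse}, drawn from Granville–Kurlberg (\cite{GK13}, Proposition~17 and Lemma~16), which shows $G^k=S_d^k$ even when branch loci intersect, under a mild lower bound on $p$ (absorbed into the implied constant). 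The paper flags this explicitly: ``To prove independence when disjoint ramification does not hold, we need the following key result.'' Without that result, or an independent argument handling overlapping branch loci, the $k$-tuple assertion \eqref{eq:prime-k-tuple} is not established for all choices of distinct shifts.
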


The latter assertion is a natural function field analogue of the prime
$k$-tuple conjecture for integers in short intervals. However, unlike
the integer case, for $f$ Morse there are {\em no} fluctuations in the
Hardy-Littlewood constants as $h_{1}, \ldots, h_{k}$ varies over
distinct elements. Interestingly, large variations do occur in the
non-Morse case
(cf. Section~\ref{sec:further-degenerate-examples}), and, very
surprisingly, there are non-Morse examples where ``prime independence'' 
breaks down completely for certain rare shifts
(cf. Section~\ref{sec:breakd-indep-prim}.)

We remark that an easy consequence of \eqref{eq:primes} is a
prime number theorem for progressions that is valid for ``very large''
Morse moduli: given $b \in \fp^{\times}$ and a Morse polynomial
$q(x) \in M_{d}(\fp)$,
$$
|\{ a \in \fp : a \cdot q(x) + b \text{ is prime} \}|
=
p/d + O_{d}(\sqrt{p}).
$$

The distribution of primes, and prime $k$-tuples, in ``short
intervals'', i.e., sets of the form
$I(f,1) := \{ f(x) + a_{1} x + a_{0} : a_{0}, a_{1} \in \fq \}$, or
more generally, sets of the form
$I(f,m) := \{ f(x) + \sum_{n=0}^{m} a_{n} x^{n} : a_{0}, \ldots, a_{m}
\in \fq \}$ for $1 \le m <\deg(f)$, has received considerable
attention in the large field limit, i.e., where $q = p^{k} \to \infty$
(in particular allowing for $p$ fixed).
%
That (\ref{eq:primes}) holds for $f$ ``in general'' (i.e., when
$f(x)-t$ has Galois group $S_{d}$ over $\overline{\fq}(t)$) goes back
to Cohen's pioneering work \cite{cohen70}; 
in \cite{cohen-uniform} he showed that it holds for the short interval
$I(f,1)$ provided $f \in M_{d}(\fq)$ and $p>d$.
In \cite{B}
Bary-Soroker removed this size condition for $p$ odd, and 
allowed for more general shifts.
In
\cite{BBR}, the second author, together with Bank and Bary-Soroker,
show that for any prime power $q$, for {\em all} polynomials $f$, and
$m\geq3$
\[
  |\left\{g\in I(f,m):g\mbox{ is prime}\right\}|
  =
  \frac{q^{m+1}}{\deg(f)}+O_{\deg{f}}(q^{m+1/2});
\]
in fact, under minor restrictions on $f$ and $q$ one may take $m=2$ or even
$m=1$ (it is also implicit that (\ref{eq:primes}) holds for 
$f$ Morse.)
An analog of the prime $k$-tuple conjecture for the ``long''
interval $M_d(\fp)$ was shown by Pollack \cite{pollack08} provided
that $(2p,d)=1$.  This co-primality condition was  removed by
Bary-Soroker \cite{B}; Bank and Bary-Soroker then treated the case of
short intervals (i.e., $I(f,m)$, $m \ge 2$) and  $q$ odd in
\cite{BB}.
We also mention that Entin \cite{entin-bateman-horn} has shown prime
$k$-tuple equidistribution for short intervals
in a more general setting, namely for
``Bateman-Horn'' type specializations (e.g., for nonassociate,
separable and irreducible polynomials
$F_{1}(x,t), \ldots, F_{k}(x,t) \in \fq[x,t]$, he obtains the
asymptotics for simultaneous irreducibility 
of the $k$ specialized polynomials $F_{1}(g(t),t), \ldots, F_{k}(g(t),t)$, for
$g \in I(f,m)$); cf. \cite{entin-in-preparation} for recent further
developments.
%
%
For a nice survey of recent results on
function field analogs of similar  questions in classical number
theory, including analogs of cancellation in Möbius $\mu$ and Chowla
sums described below, see \cite{R_ICMproceedings}.

A function field analog of the Möbius $\mu$ function on $M_{d}(\fp)$
can be defined as follows: given a  squarefree polynomial
$g \in M_{d}(\fp)$, write $g$ as a product of $l$ distinct
monic irreducibles, i.e., $g = \prod_{i=1}^l g_{i}$, and define
$
\mu(g) := (-1)^{l};
$
if $g$ is not squarefree we set $\mu(g) = 0$.
We then find that there
is square root cancellation in Möbius sums, as well as in the
auto-correlation type sums appearing in Chowla's conjecture
(cf. \cite{C}), for very 
short intervals in the large $p$ limit.
\begin{thm}
\label{thm:moebius-chowla-cancellation}
Assume that $f \in M_{d}(\fp)$ is Morse, and $d \ge 2$.  Then
\begin{equation}
  \label{eq:moebius-sum}
\sum_{g \in I(f)} \mu(g) = O_d(\sqrt{p}).  
\end{equation}
More generally, given distinct elements $h_{1}, h_{2}, \ldots, h_{k}
  \in \fp$, we have
\begin{equation}
  \label{eq:chowla-sum}
\sum_{g \in I(f)} 
\left(  \prod_{i=1}^{k}  \mu(g+h_{i})  \right)
= O_{d,k}(\sqrt{p}).
\end{equation}
\end{thm}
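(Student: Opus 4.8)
The plan is to convert both sums into one-variable character sums over $\fp$ and then apply Weil's bound. The key input is the classical description of $\mu$ on polynomials of a fixed degree in terms of the sign of a Frobenius permutation: if $g\in M_{d}(\fp)$ is squarefree and $g=\prod_{c}g_{c}$ is its factorization into (distinct) monic irreducibles, indexed by the cycles $c$ of the Frobenius conjugacy class acting on the $d$ roots of $g$, then
\[
\mu(g)=(-1)^{\#\{\text{cycles}\}}=(-1)^{d}\operatorname{sgn}(\frob_{g})=(-1)^{d}\chi\bigl(\operatorname{disc}(g)\bigr),
\]
where $\chi$ is the quadratic character of $\fp$ (extended by $\chi(0)=0$) and we use that the sign of the Frobenius permutation of the roots equals the Legendre symbol of the discriminant. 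Set $\delta(t):=\operatorname{disc}_{x}(f(x)-t)\in\fp[t]$, so that $\operatorname{disc}(f(x)+a)=\delta(-a)$; since the roots of $\delta$ are the critical values of $f$, the Morse hypothesis together with $p>d$ forces $\delta$ to be squarefree of degree exactly $d-1$. As $f+a$ fails to be squarefree for at most $d-1$ values of $a$ (those with $-a$ a critical value), we obtain
\[
\sum_{g\in I(f)}\mu(g)=(-1)^{d}\sum_{t\in\fp}\chi\bigl(\delta(t)\bigr)+O_{d}(1),
\]
and Weil's bound for character sums — valid because $\delta$ is squarefree of positive degree, hence not a constant times a square — yields \eqref{eq:moebius-sum} (the inner sum in fact vanishes when $d=2$).

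For \eqref{eq:chowla-sum} the same substitution, with $t=-a$ so that $g+h_{i}=f(x)-(t-h_{i})$, gives
\[
\sum_{g\in I(f)}\prod_{i=1}^{k}\mu(g+h_{i})=(-1)^{dk}\sum_{t\in\fp}\chi\Bigl(\prod_{i=1}^{k}\delta(t-h_{i})\Bigr)+O_{d,k}(1),
\]
both sides again vanishing at the $O_{d,k}(1)$ values of $t$ for which some $t-h_{i}$ is a critical value. Weil's bound now delivers the desired $O_{d,k}(\sqrt{p})$ as soon as one knows that the polynomial $P(t):=\prod_{i=1}^{k}\delta(t-h_{i})$, of degree $k(d-1)$, is \emph{not} a constant times a perfect square. (For $p$ bounded in terms of $d$ and $k$ the Chowla sum is trivially $O_{d,k}(1)=O_{d,k}(\sqrt{p})$, so one may assume $p$ large throughout.) This non-square statement is the crux.

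To prove it, factor $\delta(t)=c\prod_{j=1}^{d-1}(t-\rho_{j})$ over $\overline{\fp}$ with the $\rho_{j}$ distinct; then $P$ is a constant times a square exactly when every element of $\overline{\fp}$ occurs with even multiplicity in the multiset $\{\rho_{j}+h_{i}\}_{1\le j\le d-1,\,1\le i\le k}$. Projecting modulo $\fp$ (as additive groups) collapses the $k$ shifts, so, grouping the $\rho_{j}$ according to their $\fp$-cosets, evenness of every fibre translates into the requirement that $\bigl(\sum_{i}u^{h_{i}}\bigr)\cdot\bigl(\sum_{c\in C}u^{c}\bigr)=0$ in the group algebra $\mathbb{F}_{2}[\fp]\cong\mathbb{F}_{2}[u]/(u^{p}-1)$, for each small-support element $\sum_{c\in C}u^{c}$ built from the $\fp$-coset structure of the roots of $\delta$. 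Since $p$ is odd this ring is a product of finite fields, and a short analysis shows that for $p$ large and the $h_{i}$ distinct no such relation can hold unless $\{h_{i}\}$ is all of $\fp$ — impossible once $p>k$. (Equivalently, one can argue geometrically: for $p$ large the Galois group of $\prod_{i=1}^{k}\bigl(f(x)-(t-h_{i})\bigr)$ over $\overline{\fp}(t)$ is the full product $\prod_{i=1}^{k}S_{d}$ — the $k$ shifted covers are linearly disjoint because their branch loci, the $\fp$-translates $\{f(c_{j})+h_{i}\}_{j}$ for $1\le i\le k$, are pairwise distinct — and $\prod_{i}\operatorname{sgn}$ is a nontrivial character of this product, which again forces $P$ to be a non-square.) I expect this lemma to be the main obstacle: both the group-algebra bookkeeping and, in the geometric route, the proof that the shifted covers are independent must be carried out with an explicit handle on how large $p$ has to be relative to $d$ and $k$.
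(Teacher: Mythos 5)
Your reduction of the Möbius sum to a quadratic-character sum via Stickelberger's theorem ($\mu(g)=(-1)^d\chi(\operatorname{disc}g)$) is a genuinely different — and for \eqref{eq:moebius-sum}, more elementary — route than the paper's. The paper identifies $\mu$ with the class function $(-1)^d\operatorname{sgn}$ on $S_d$ and invokes the Chebotarev density theorem (Proposition~\ref{prop:all-possible-densities}), while you convert everything to the one-variable sum $\sum_t\chi(\delta(t))$ and apply Weil directly. For $f$ Morse with $p>d$, $\delta$ is indeed squarefree of degree $d-1$, and your argument for \eqref{eq:moebius-sum} is complete and correct. Your reduction for \eqref{eq:chowla-sum} to showing $P(t)=\prod_i\delta(t-h_i)$ is not a constant times a square is also correct, and incidentally makes the dichotomy of Theorem~\ref{thm:moebius-chowla-no-cancellation} transparent.

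However, the proof of the non-square lemma — which you rightly flag as the crux — has a real gap. Your parenthetical geometric argument asserts that the branch loci $R_f+h_1,\dots,R_f+h_k$ are ``pairwise distinct'' because the $h_i$ are distinct. This is false: it requires $h_i-h_j\notin R_f-R_f$ for all $i\neq j$, i.e., $h_i-h_j\notin B(f)$, and $B(f)$ can be nonempty for Morse $f$ (e.g., $f(x)=x^3-3x$ gives $R_f=\{2,-2\}$ and $B(f)=\{4,-4\}$). When branch loci collide, linear disjointness of the $L_{h_i}$ over $\overline{\fp}(t)$ is no longer automatic, and showing $\gal=\prod_iS_d$ becomes the whole difficulty; this is exactly what Proposition~\ref{prop:linear-disjoint-morse} (drawn from Lemma~16 and Proposition~17 of \cite{GK13}, and responsible for the hypothesis $p>4^{k+d-1}+1$) supplies. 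The paper needs that input just as you would. Your $\mathbb{F}_2[\fp]$-group-algebra alternative is aimed at the right combinatorial question — whether the multiset $\{\rho_j+h_i\}$ has an entry of odd multiplicity — but the claim that $A(u)B_C(u)=0$ for all cosets $C$ forces $\{h_i\}=\fp$ is not justified; the partition of the $\rho_j$ into $\fp$-cosets and the resulting $B_C$'s are not at your disposal, and the analysis of $\mathbb{F}_2[u]/(u^p+1)$ is only hinted at. As written, the second half of the theorem is not proved; filling the hole amounts to re-proving Proposition~\ref{prop:linear-disjoint-morse}, which is where the genuine content lies.
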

For general $f$ (i.e., non-Morse) we shall see that square root
cancellation in (\ref{eq:moebius-sum}) is {\em equivalent} to square
root cancellation in (\ref{eq:chowla-sum}); moreover either there is
square root cancellation, or there is {\em no cancellation at
  all}. See Section~\ref{sec:lack-canc-mobi} for more details, as well
as examples of intervals on which $\mu$ has constant sign.

In \cite{carmon-rudnick-moebius-chowla}, Carmon and Rudnick showed
that Chowla type sums over $M_d(\fq)$ has square root cancellation as
$q\to \infty$, provided $q$ is odd; in \cite{carmon-char-two}, Carmon
treated even $q$.  In \cite{KR} Keating and Rudnick
proved square root cancellation for Möbius sum over intervals of type
$I(f,m)$ for $m\geq2$; they also gave examples of polynomials $f$ for
which the Möbius sum over $I(f,1)$ has no cancellation at all.
We also note that Entin \cite{entin-bateman-horn} can treat
cancellation in short Chowla type sums in the more general Bateman-Horn
type setting described earlier.


\subsubsection{Class function correlations}
\label{sec:corr-class-funct}

The above results are easily deduced from a more general result valid
for functions induced from class functions on $S_{d}$, the symmetric
group on $d$ letters.  Briefly, for squarefree $g \in M_{d}(\fp)$ we
associate a conjugacy class $\sigma_{g}$ in $S_{d}$ as follows:
factoring $g$ into prime polynomials, i.e., writing
$g = \prod_{i=1}^{l} P_{i}$, choose $l$ {\em disjoint} cycles
$c_{1}, \ldots, c_{l} \in S_{d}$ such that the length of $c_{i}$
equals $\deg(P_i)$ for $1 \le i \le l$; we then define $\sigma_{g}$ as
the conjugacy class generated by $\prod_{i=1}^{l} c_{i}$.  

Now, given a class function $\phi$ on $S_{d}$ (i.e. $\phi(\sigma)$
only depends on the conjugacy class of $\sigma$), the above
construction allows us to define a function, also denoted $\phi$, on
the set of squarefree elements in $M_{d}(\fp)$. As the number of
non-squarefree polynomials in $I(f)$, for $f \in M_{d}$ is $O_{d}(1)$
(cf. (\ref{eq:square-free-short-interval})) we may then choose
any bounded extension of $\phi$ to $M_{d}(\fp)$. 
In order to simplify statements we will in what follows always 
assume  that the supremum norms of all class functions,
and their extensions, are bounded by some absolute constant.

\begin{thm}
  \label{thm:class-function-correlations}
  Assume that $f \in M_{d}(\fp)$ is a Morse polynomial, and $d \ge 2$.
  Further, let $\phi_1, \ldots, \phi_k$ be class functions
  on $S_{d}$,  extended as above to functions
  on $M_{d}(\fp)$ .
Then there exists
  constants $\{c(\phi_i)\}_{i=1}^{k}$, given by
$$c(\phi_i) = 
\frac{1}{|S_{d}|}
\sum_{ \sigma \in S_{d}} \phi_i(\sigma), \quad i=1, \ldots, k.
$$
  such that
  $$
  \sum_{ g \in I(f)} 
  \phi_i(g)
  = p \cdot c(\phi_i) + O_{d}(\sqrt{p})
  $$
  for $i = 1, \ldots, k$.
  Moreover, given distinct elements $h_{1}, h_{2}, \ldots, h_{k}
  \in \fp$, we have
  \begin{equation}
    \label{eq:class-function-independence}
\sum_{ g \in I(f)}
\left(
\prod_{i=1}^{k}  \phi_i(g+h_{i})  
\right)
  = p \cdot \prod_{i=1}^k c(\phi_i) + O_{d,k}(\sqrt{p}).
  \end{equation}
\end{thm}
\noindent
We remark that Theorem~\ref{thm:class-function-correlations} does {\em
  not} hold in the large $q$ limit,
cf. Section~\ref{sec:large-q} for further details, together with a
suitably weakened independence result valid for  the large $q$ limit.

When detecting factorization patterns the constants $c(\phi_{i})$ can
be given a simple combinatorial interpretation.  Namely, given a
desired factorization pattern of $g\in M_{d}(\fp)$, associate an
$S_{d}$-conjugacy class $C$ as described above.  This in turn can be
interpreted as a partition of $d$, i.e., $d = \sum_{j \geq 1} d_{j} j$
(e.g., for the partition $4=2+1+1$, $d_{1} =2$, $d_{2}=1$, and
$d_{j} = 0$ for $j > 2$).  With $\phi = 1_{C}$, where $1_{C}$ denotes
the characteristic function of the conjugacy class $C$, we have
$$
c(\phi) = \frac{|C|}{|S_{d}|}
=
\frac{1}{\prod_{j} (j^{d_{j}} (d_{j}!)) }
$$
(since $|C| = \frac{|S_{d}|}{\prod_{j} j^{d_{j}} (d_{j}!) }$.)  For
example, if 
$C = \{ \sigma \in S_{d} : \sigma \sim (123\ldots d) \}$, we find that
$1_{C} = 1_{\text{Prime}}$ (the characteristic function on the set of
prime polynomials), and $c(1_{\text{Prime}}) = |C|/|S_{d}|= (d-1)!/d!=1/d$.

Other interesting examples of class
functions include
the Möbius $\mu$ function, as well as the function field analog of
divisor functions $d_{r}$ for integer $r \ge 2$; e.g., $d_{2}(g)$ is
the number of ways to decompose $g$ as a product of two monic
polynomials. In particular, Theorems~\ref{thm:prime-independence} and
\ref{thm:moebius-chowla-cancellation} are immediate consequences of
Theorem~\ref{thm:class-function-correlations}.  In similar fashion we
can treat short interval function field analogs of the ``shifted
divisor problem'', e.g., the sum
$ \sum_{ g \in I(f)} d_{r}(g) d_{r}(g+1) $, as well as the Titchmarsh
divisor problem, e.g., sums of the form
$\sum_{ g \in I(f)} 1_{\text{Prime}}(g) d_{r}(g+1)$.  
These results can be viewed as very short interval versions of recent
results \cite{ABR} by Andrade, Bary-Soroker and Rudnick for the full
interval $M_d(\fq)$.

We remark that Theorem~\ref{thm:class-function-correlations} is, via
the Chebotarev density theorem, Galois
theoretic at heart (cf. Section~\ref{sec:remarks-galo-theory}): to
each polynomial $f(x)+h_{i}+t$ we can associate 
a field extension $L_{h_{i}}/\fp(t)$ with Galois group
$G_{h_{i}} = \gal(L_{h_{i}}/\fp(t)) \simeq S_{d}$, and the independence
implicit in (\ref{eq:class-function-independence}) boils down to
linear independence of the field extensions
$L_{h_{1}}, L_{h_{2}}, \ldots, L_{h_{k}}$. In particular, with $L^{k}$
denoting the compositium of $L_{h_{1}}, \ldots, L_{h_{k}}$, we have
$\gal(L^k/\fp(t)) \simeq (S_{d})^{k}$.

\subsection{Independence results for non-generic intervals}
\label{sec:results-non-generic}

For non-Morse polynomials the situation is more complicated since
$G_{h_{i}}$
%
%
might be smaller than $S_{d}$, and
$\gal(L^k/\fp(t))$  is in general {\em not}
a product of groups.
However, while independence can fail for non-Morse polynomials
(cf. Section~\ref{sec:breakd-indep-prim}), we can still show that
independence holds for ``generic'' choices of distinct shifts
$h_{1}, \ldots, h_{k} \in \fp$ and $p$ large.
%
%
\begin{thm}
\label{thm:non-generic-correlations}
  Let $d \ge 2$, and let $\phi_1, \ldots, \phi_k$ be class functions on
  $S_{d}$, extended as before to functions on $M_{d}(\fp)$.
  There exists {\bf finite} sets 
  $C(\phi_1,d), \ldots, C(\phi_k,d)$ (with $C(\phi_i,d)$ only
  depending on $\phi_{i},d$) 
  such that the following holds: 
%
 For $f \in M_{d}(\fp)$, 
  $$
  \sum_{ g \in I(f)} 
  \phi_i(g)
  = p \cdot c_i + O_{d}(\sqrt{p}),
  $$
  where $c_i \in C(\phi_i,d)$, for $i = 1, \ldots, k$.
  Moreover, there exists a set
  $B(f) \subset \fp$, of cardinality at most
  $(d-1)^{2}$, with the following property: given distinct elements
  $h_{1}, h_{2}, \ldots, h_{k} \in \fp$ such that
  $h_{i}-h_{j} \not\in B(f)$ for $i\neq j$, we have
$$
\sum_{ g \in I(f)}
\left(
\prod_{i=1}^{k}  \phi_i(g+h_{i})  
\right)
  = p \cdot \prod_{i=1}^k  c_i + O_{d,k}(\sqrt{p}).
  $$
\end{thm}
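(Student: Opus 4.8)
The plan is to reduce every assertion to the explicit function field Chebotarev theorem (the Weil bound) — the same input used to prove Theorem~\ref{thm:class-function-correlations} — applied to the splitting fields of the polynomials $f(X)+h_i+a$ regarded as separable polynomials in $X$ over the rational function field $\fp(a)$.

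First I would fix the dictionary and dispose of the single sums. Viewing $a$ as a variable, $F_i(X,a):=f(X)+h_i+a$ is irreducible over $\fp(a)$ (it is linear and primitive in $a$); let $L_i$ be its splitting field over $\fp(a)$, with arithmetic Galois group $G_i\hookrightarrow S_d$ (the faithful action on the $d$ roots) and geometric subgroup $\widetilde G_i=\gal(L_i\overline{\fp}/\overline{\fp}(a))\trianglelefteq G_i$, with $G_i/\widetilde G_i$ cyclic. Writing $V$ for the set of critical values of $f$ (so $|V|\le d-1$), the place $a=a_0$ is unramified in $L_i$ whenever $a_0\notin -h_i-V$, and for such $a_0$ the factorization type of $f(X)+h_i+a_0\in\fp[X]$ equals the cycle type of $\frob_{a=a_0}$ in $S_d$; hence $\phi_i(f(x)+h_i+a_0)=\phi_i(\frob_{a=a_0})$, reading $\phi_i$ as a class function on $S_d$ restricted to $G_i$, with the $O_d(1)$ excluded $a_0$ contributing $O_d(1)$. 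All classes $\frob_{a=a_0}$ ($a_0\in\fp$) lie in the single coset $\gamma_i\widetilde G_i$ determined by the arithmetic Frobenius of $\fp$, so the Weil bound gives $\sum_{g\in I(f)}\phi_i(g)=p\,c_i+O_d(\sqrt p)$ with $c_i$ the average of $\phi_i$ over $\gamma_i\widetilde G_i$ (the error is $O_d(\sqrt p)$ since $|G_i|\le d!$ and the genus of $L_i$ is $O_d(1)$ by Riemann–Hurwitz: at most $d$ branch points, each of ramification $\le d!$). As $f$ ranges over $M_d(\fp)$ the pair $(\widetilde G_i,\gamma_i\widetilde G_i)$ runs through only finitely many $S_d$-conjugacy classes of (subgroup, coset), so $c_i$ lies in the finite set $C(\phi_i,d)$ of all such averages.

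For the correlation sum, put $L_{\mathbf h}=L_1\cdots L_k$, $G_{\mathbf h}=\gal(L_{\mathbf h}/\fp(a))\le\prod_i G_i$ and $\widetilde G_{\mathbf h}\le\prod_i\widetilde G_i$; its genus is still $O_{d,k}(1)$, so the Weil bound yields
\[
\sum_{g\in I(f)}\prod_{i=1}^{k}\phi_i(g+h_i)=p\cdot\Bigl(\text{average of }\textstyle\prod_i\phi_i(\tau_i)\text{ over the Frobenius coset of }\widetilde G_{\mathbf h}\text{ in }G_{\mathbf h}\Bigr)+O_{d,k}(\sqrt p).
\]
Hence it is enough to show that, when $h_i-h_j\notin B(f):=\{\,c'-c:c,c'\in V\,\}\cap\fp$ for all $i\ne j$, one has $\widetilde G_{\mathbf h}=\prod_i\widetilde G_i$: granting this, the Frobenius coset of $\widetilde G_{\mathbf h}$ in $G_{\mathbf h}$ is $\prod_i\gamma_i\widetilde G_i$ (its projection to each factor is the individual arithmetic Frobenius coset), so the average factors over $i$, and since $L_i$ is isomorphic to $L_1$ as a cover of the $a$-line via $a\mapsto a+h_1-h_i$ — carrying the same data $(\widetilde G,\gamma\widetilde G)$ — it equals $\prod_i c_i$. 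Note $|B(f)|\le|V|^2\le(d-1)^2$, which gives the stated bound on the exceptional set.

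The geometric independence $\widetilde G_{\mathbf h}=\prod_i\widetilde G_i$ is the heart of the matter, and I would prove it by induction on $k$: it suffices that $L_k\overline{\fp}$ is linearly disjoint over $\overline{\fp}(a)$ from $M:=L_1\overline{\fp}\cdots L_{k-1}\overline{\fp}$, i.e.\ that $F:=L_k\overline{\fp}\cap M=\overline{\fp}(a)$. The cover $L_i\overline{\fp}/\overline{\fp}(a)$ is branched exactly over $(-h_i-V)\cup\{\infty\}$; since $h_k-h_i\in\fp$ and $h_k-h_i\notin B(f)$, the finite parts $-h_k-V$ and $-h_i-V$ are disjoint for every $i<k$, so $F$ — contained in both $L_k\overline{\fp}$ and $M$ — is branched only over $\infty$. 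Because $f$ is monic of degree $d<p$, all $d$ roots of $f(X)+h_k+a$ near $a=\infty$ lie in $\overline{\fp}((a^{-1/d}))$, a tame cyclic extension of $\overline{\fp}((a^{-1}))$; hence $L_k\overline{\fp}$, and therefore $F$, is tamely ramified over $\infty$. But there is no nontrivial connected cover of $\mathbb{P}^1_{\overline{\fp}}$ branched over a single point and tame there (Riemann–Hurwitz forces $2g-2\le -n-1<-2$ for degree $n\ge 2$; equivalently the tame fundamental group of $\mathbb{A}^1_{\overline{\fp}}$ is trivial), so $F=\overline{\fp}(a)$ and the induction closes. This tameness step — where the hypothesis $p>d$ is precisely what excludes an Artin–Schreier-type common subcover ramified only at infinity — is the main obstacle; everything else is bookkeeping with the Weil bound.
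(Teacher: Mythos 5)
Your proposal is correct and follows the same structural route as the paper: apply the explicit function--field Chebotarev theorem to the covers $L_i \to \mathbb{P}^1_a$ to get the single sums (with the finite set $C(\phi_i,d)$ coming from the finitely many possibilities for the pair $(\widetilde G_i,\gamma_i\widetilde G_i)$ up to conjugacy), and then reduce the correlation sum to the geometric independence $\widetilde G_{\mathbf h}=\prod_i\widetilde G_i$ under the disjoint-ramification hypothesis $h_i-h_j\notin B(f)$. The only substantive difference is that where the paper simply invokes Proposition~\ref{prop:linear-disjoint-fields} (imported from [K09]) to obtain $G^k_{\text{geom}}=(G_{0,\text{geom}})^k$, you reprove the linear disjointness from scratch: the intersection field $F=L_k\overline{\fp}\cap (L_1\overline{\fp}\cdots L_{k-1}\overline{\fp})$ has finite branch locus contained in $(-h_k-V)\cap\bigcup_{i<k}(-h_i-V)=\emptyset$, is tamely ramified at $\infty$ since $p>d$ (Newton polygon/Puiseux at $\infty$), and a tame cover of $\mathbb{P}^1_{\overline{\fp}}$ branched over a single point is trivial by Riemann--Hurwitz. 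This is in substance the same argument used in [K09], so you gain no new generality, but your write-up is self-contained where the paper's is not; you also correctly observe that the translation $a\mapsto a+h_1-h_i$ identifies the data for different shifts, which justifies that the constants appearing in the product are the same $c_i$ as in the single sums. Minor cosmetic points: your $B(f)$ includes $0$ whereas the paper's excludes it, which is immaterial since the $h_i$ are distinct; and "branched exactly over $(-h_i-V)\cup\{\infty\}$" should be "branched only within" (containment is all you use, and equality need not hold if some critical points are degenerate).
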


Note that the number of distinct shifts
$h_{1}, \ldots, h_{k} \in \fp$ such that $h_{i}-h_{j} \in B(f)$ is
$O_{k,d}(p^{k-1})$, hence  independence holds for most choices of
shifts.

Determining the constants $c_{i}$ is
delicate\footnote{E.g., some  factorization patterns
might  not occur at all, cf. Section~\ref{sec:further-degenerate-examples}.}
and requires some knowledge about $G_{h_{1}} = \gal(L_{h_1}/\fp(t))$
(it turns out that the isomorphism class of $G_{h_{i}}$ does not change with
$h_{i}$.)
With $l_{h_{1}} := L_{h_{1}} \cap \overline{\fp}$ denoting the field
of constants in $L_{h_{1}}$, let
$G_{h_{1},\text{geom}}:= \gal(L_{h_{1}}/l_{h_{1}}(t))$ denote the ``geometric
part'' of $G_{h_{1}}$. After making a {\em non-canonical} labeling of
the roots of $f(x)+h_{1}+t$ and $f(x)+h_{i}+t$ (regarded as 
polynomials with coefficients in $\fp(t)$), we obtain an identification
and inclusion $G_{h_{i}} \simeq G_{h_{1}} \subset S_{d}$ and can write
\begin{equation}
  \label{eq:constant-via-coset-sum}
c_{i} =
\frac{1}{|G_{h_{1},\text{geom}}|}
\sum_{\sigma \in \tau \cdot G_{h_{1},\text{geom}}} \phi_i(\sigma)
\end{equation}
where $\tau \in G_{h_{1}}$ is any element such that
$\tau|_{l_{h_{1}}}$ 
acts as Frobenious on the finite field extension
$l_{h_{1}}/\fp$, i.e., $\tau(\alpha) = \alpha^{p}$ for $\alpha \in
l_{h_{1}}$.
%
For some examples where  class function constants are computed using
Galois theory,
see Sections~\ref{sec:class-funct-const} and
\ref{sec:miss-fact-types}.

The independence can also be explained in terms of Galois
theory. Briefly, after making non-canonical identifications
$G_{h_{i}} \simeq G_{h_{1}}$ for $i=2,3,\ldots,k$, we obtain
inclusions
$$
\gal(L^k/\fp(t)) \subset \prod_{i=1}^{k} G_{h_{i}} \subset (G_{h_{1}})^{k}
$$
and the independence amounts to Frobenius equidistribution inside the
coset $(\tau \cdot G_{h_{1},\text{geom}})^{k}$.  We note that the
methods (cf. the remark after Proposition~\ref{prop:uncorrelated-class-average})
allows us to take $\phi_1,\ldots,\phi_k$ to be class functions 
on $G_{h_{1}}, \ldots, G_{h_{k}}$, rather than on $S_{d}$, and this
sometimes allows for going beyond factorization patterns.
E.g., the cycles $(123)$ and $(132)$ are conjugate in $S_{3}$, but not
in $A_{3}$ (the latter group is abelian); when
$G_{h_{i}} \simeq A_{3}$, after a non-canonical labeling of the roots,
we can distinguish the two cases in terms of the Frobenious action on
the roots.  Another example is given in Section~\ref{sec:going-beyond-fact}.

A more detailed discussion, in particular regarding  the set
$B(f)$ can be found in 
Sections~\ref{sec:remarks-galo-theory} and \ref{sec:class-functions}.


\subsection{Lack of cancellation in Möbius and Chowla sums}
\label{sec:lack-canc-mobi}

An unexpected phenomena is the existence of elements
$f \in M_{d}(\fp)$ for which there is {\em no
  cancellation}
in short interval Möbius sum, i.e., 
$$
\left|
  \sum_{g \in I(f)} \mu(g) 
\right|
=
p + O_{d}(1).
$$

For example, for $d$ odd and
  $p \equiv 1 \mod d$, take $f(x) = x^{d}$
  (cf. Sections~\ref{sec:lack-cancellation-moebius} and
  \ref{sec:canc-chowla-sums}).  Even more surprising, as 
  noted in \cite{KR}, for $f(x) = x^{2p}$ there is complete lack of
  cancellation for the sum over the {longer} interval $I(f,1)$.
%
%
In fact, either there is square root cancellation in both the Möbius
sum as well as the Chowla sum, or there is essentially no cancellation
whatsoever in either sum
(cf. Theorem~\ref{thm:moebius-chowla-cancellation}.)
\begin{thm}
  \label{thm:moebius-chowla-no-cancellation}
  Let $f \in M_{d}(\fp)$ for $d \ge 2$, and let 
  $h_{1}, \ldots, h_{k} \in \fp$ be distinct   elements.
  Then one of the following occurs: either both
  $$
  \left| \sum_{g \in I(f)} \mu(g) \right| =  p + O_{d}(1), \quad
  \left|
\sum_{g \in I(f)} 
\left(  \prod_{i=1}^{k}  \mu(g+h_{i})  \right)
\right|
= p + O_{k,d}(1)
$$
holds, or both
$$
  \left| \sum_{g \in I(f)} \mu(g) \right| =  O_{d}(\sqrt{p}), \quad
  \left|
\sum_{g \in I(f)} 
\left(  \prod_{i=1}^{k}  \mu(g+h_{i})  \right)
\right|
=   O_{k,d}(\sqrt{p})
$$
holds.  
\end{thm}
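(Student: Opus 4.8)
The key observation is that $\mu$ is, up to the indicator of squarefreeness, the class function $\operatorname{sgn}$ on $S_d$, and the decisive quantity is the constant $c(\mu)$ attached to a given $f$. So I would first reduce everything to understanding this constant via the monodromy/Galois-theoretic machinery underlying Theorem~\ref{thm:class-function-correlations} and Theorem~\ref{thm:non-generic-correlations}. Recall that for $f\in M_d(\fp)$ the sum $\sum_{g\in I(f)}\phi(g)$ is, by the Chebotarev/Weil argument, equal to $p$ times the average of $\phi$ over the "arithmetic monodromy" coset — concretely, over the image in $S_d$ of a Frobenius-twisted version of the geometric monodromy group $G=G(f)$ of the cover $y\mapsto f(y)-t$, acting on the $d$ roots — plus an $O_d(\sqrt p)$ error coming from Weil's bound applied to the relevant $\ell$-adic sheaf (or equivalently from the Lang–Weil/Bombieri estimate for the associated curve). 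In particular $c(\mu)$ is either $0$ (so the Möbius sum is $O_d(\sqrt p)$) or $\pm1$ (so the sum is $\pm p+O_d(1)$, using also that the count of non-squarefree $g\in I(f)$ is $O_d(1)$), and which case occurs is governed purely by whether the sign character is trivial on the relevant coset.

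Second, I would carry out the same reduction for the Chowla sum. The $k$-fold correlation $\sum_{g\in I(f)}\prod_i \mu(g+h_i)$ is governed, again by the monodromy argument, by the product cover $\prod_i(f(y_i)-(t+h_i))$, whose geometric monodromy group $G_k$ sits inside $S_d^k$, surjecting onto each factor $G$. The main term is $p$ times the average of $\prod_i\operatorname{sgn}(\sigma_i)$ over the arithmetic coset of $G_k$, again with an $O_{k,d}(\sqrt p)$ error. The crucial point — and here is where Theorem~\ref{thm:non-generic-correlations} and the set $B(f)$ enter — is that for $h_i-h_j\notin B(f)$ one has a fiber-product/independence statement: the component structure of $G_k$ is such that $\operatorname{sgn}$ on the $i$-th factor is trivial on (the arithmetic coset of) $G$ if and only if $\prod_i\operatorname{sgn}$ is trivial on (the arithmetic coset of) $G_k$. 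For $h_i-h_j\in B(f)$ one must argue separately, but since the claim only asserts a dichotomy, not a precise constant, it suffices to check that $\prod_i\operatorname{sgn}$ is trivial on the coset precisely when each individual $\operatorname{sgn}$ is — and if any one factor already forces the Möbius sum to be $\sim p$, i.e. $\operatorname{sgn}$ trivial on that $G$, then (since $G_k$ surjects onto each $G$, and conjugation by the twisting element is compatible across factors) $\prod_i\operatorname{sgn}$ restricted to the $G_k$-coset is a product of characters each of which is trivial, hence trivial, giving the no-cancellation case for the Chowla sum; conversely if some $\operatorname{sgn}$ is nontrivial on its $G$, then for $h_i-h_j\notin B(f)$ the independence gives that $\prod_i\operatorname{sgn}$ is nontrivial on the $G_k$-coset.

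Third, assemble: the four combinations "(Möbius cancels, Chowla doesn't)" and "(Möbius doesn't cancel, Chowla does)" are both excluded. The first is excluded because $c(\mu)\ne0$ forces a nontrivial contribution that survives in the product (modulo the $B(f)$ analysis above). The second is excluded because $c(\mu)=0$ means $\operatorname{sgn}$ is nontrivial on the coset of $G$, hence — by surjectivity of $G_k\to G$ onto, say, the first factor — the character $\sigma\mapsto\operatorname{sgn}(\sigma_1)$ is already nontrivial on $G_k$, and then generically (for admissible shifts) the full product $\prod_i\operatorname{sgn}(\sigma_i)$ is nontrivial on $G_k$ as well, killing the main term of the Chowla sum. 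In all cases the error term is $O_{k,d}(\sqrt p)$ from the Weil bound, and the "$+O_d(1)$" in the no-cancellation case absorbs the non-squarefree polynomials and the discrepancy between $\mu$ and $\operatorname{sgn}\cdot 1_{\text{sqfree}}$.

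**The main obstacle.** The delicate point is the independence/fiber-product analysis of the combined monodromy group $G_k\le S_d^k$ for the shifted cover, in particular controlling exactly which differences $h_i-h_j$ are "bad" and showing that $|B(f)|\le(d-1)^2$ — this is precisely the content of Theorem~\ref{thm:non-generic-correlations}, which I would invoke. The genuinely new ingredient here is the structural claim that $\operatorname{sgn}^{\otimes k}$ is trivial on the (Frobenius-twisted) coset of $G_k$ if and only if $\operatorname{sgn}$ is trivial on the coset of $G$ in a single factor; proving this cleanly requires knowing that the "sign data" in distinct factors are either perfectly correlated or independent, with no intermediate coupling, which in turn rests on the classification of the possible geometric monodromy groups of $f(y)-t$ and their behavior under disjoint shifts. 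I expect the cleanest route is to phrase it in terms of the discriminant of $f(y)-t$: $\operatorname{sgn}(\sigma_g)=\chi_2(\operatorname{disc}(f-t)|_{t\mapsto \text{value at }g})$ for a quadratic character $\chi_2$, so $c(\mu)\ne0$ is equivalent to $\operatorname{disc}(f(x)-t)$ being, up to squares in $\fp(t)$ and constants, a nontrivial square — and then the Chowla correlation is controlled by whether $\prod_i\operatorname{disc}(f(x)-t-h_i)$ is a square in $\fp(t)$, which (for $h_i-h_j\notin B(f)$, where $B(f)$ is built from the ramification locus of the discriminant) happens iff each factor's "square-class" is trivial. That reformulation turns the group theory into an explicit statement about square classes of an explicit polynomial and should make the dichotomy transparent.
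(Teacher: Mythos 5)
Your overall strategy — reduce to the constants coming from Chebotarev on the geometric monodromy coset, observe that $\mu$ is $(-1)^d\operatorname{sgn}$ times the squarefree indicator, and analyze when $\operatorname{sgn}$, resp.\ $\prod_i\operatorname{sgn}$, is trivial on the relevant coset — matches the paper. You also correctly dispose of the uncorrelated case (shifts with $h_i-h_j\notin B(f)$) via Theorem~\ref{thm:non-generic-correlations}, and you correctly argue one direction of the correlated case: if $G_{0,\text{geom}}\subset A_d$ then $G^k_{\text{geom}}\subset A_d^k$, so the product sign is constant and both sums fail to cancel.

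The gap is in the other direction of the correlated case, which you flag as the ``main obstacle'' but do not actually close. Suppose some $h_i-h_j\in B(f)$ and $\operatorname{sgn}$ is nontrivial on $G_{0,\text{geom}}$. Your argument shows only that $\sigma\mapsto\operatorname{sgn}(\sigma_1)$ is nontrivial on $G^k$, which does not prevent $\prod_i\operatorname{sgn}(\sigma_i)$ from being \emph{constant} on $G^k_{\text{geom}}$: since $G^k_{\text{geom}}$ can be a proper subgroup of $\prod_i G_{h_i,\text{geom}}$, it could a priori lie in the kernel of the product of sign characters even though each individual sign is nontrivial, and then the Chowla sum would be $\pm p+O(\sqrt p)$ while the Möbius sum cancels. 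The phrase ``then generically (for admissible shifts) the full product is nontrivial'' only covers shifts outside $B(f)$, which are not the case under discussion. The discriminant/square-class reformulation does not dissolve this: it translates the question into whether $\prod_i\operatorname{disc}(f(x)-t-h_i)$ is a square in $\overline{\fp}(t)$, and when critical values of $f$ collide under the shifts the product may genuinely become a square by accidental pairing. The paper closes this case by a specific structural argument: let $R_{\text{odd}}\subset R_f$ be the set of critical values whose inertia generators are odd permutations; this set is nonempty because $G_{h_1,\text{geom}}$ is generated by inertia at finite points \cite[Prop.~4.4.6]{serre-topics-in-galois-theory-book}, and then \cite[Lemma~16]{GK13} guarantees (for $p$ large in terms of $k$) an odd-multiplicity element in the multiset $\bigsqcup_i(R_{\text{odd}}+h_i)$, hence an odd element of $G^k_{\text{geom}}$. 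That lemma is the real content, and its $p$-largeness hypothesis is essential: the paper shows the theorem \emph{fails} in the fixed-$p$, large-$q$ limit, precisely because then an $\fp$-translation symmetry among the shifted critical values can force $G^k_{\text{geom}}\subset A_d^k$ with each $G_{h_i,\text{geom}}\not\subset A_d$. Any correct proof must detect that obstruction, and yours as written does not.
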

We remark that lack of cancellation is
equivalent to the ``geometric part'' of a certain Galois group being
contained in the alternating group $A_{d}$. More details on this, as
well as the proof of Theorem~\ref{thm:moebius-chowla-no-cancellation}
can be found in Section~\ref{sec:mobius-chowla-type}.  Moreover, we
note that Theorem~\ref{thm:moebius-chowla-no-cancellation} is {\em not
true} in the large $q$ limit (i.e., for $p$ fixed),
cf. Section~\ref{sec:large-q}.

\subsection{Further examples of degenerate intervals}
\label{sec:further-degenerate-examples}
We next give some additional examples of short intervals exhibiting
irregular behavior.  For more details regarding these examples,
see Section~\ref{sec:exampl-degen-interv}.

\subsubsection{Prime density fluctuations}
\label{sec:prime-dens-fluct}

Let $f(x) = x^{3}$ and take
$\phi_1=\phi_2 =  1_{\text{Prime}}$. 
Here the constants vary with 
$p$, namely $c(1_{\text{Prime}},p) = 2/3$ for $p \equiv 1 \mod
3$, whereas $c(1_{\text{Prime}},p) = 0$ for $p \equiv 2 \mod
3$.
In fact, there are {\em no primes} in
$I(f)$ if $p \equiv 2 \mod 3$, and in this case the second
part of Theorem~\ref{thm:non-generic-correlations} is trivial.  On
the other hand, it can be shown that $B(f) = \emptyset$ and hence,
for $p \equiv 1 \mod 3$ and $h \not \equiv 0 \mod p$,
\begin{equation}
  \label{eq:kummer-uncorrelated}
\sum_{g \in I(f)} 1_{\text{Prime}}(g) 1_{\text{Prime}}(g+h)
=
(2/3)^{2} \cdot p + O(\sqrt{p}).
\end{equation}

In other words, after taking into account the larger than expected
prime density (for generic degree $3$ polynomials it is $1/3$), the
short interval contains the expected number of twin primes (and
similarly for prime $k$-tuples) --- the heuristic ``primes are
independent'' indeed holds in $I(f)$ as $p \to \infty$, even
though $f(x)=x^{3}$ is {\em not} Morse.

\subsubsection{Lack of cancellation in Möbius sums}
\label{sec:lack-cancellation-moebius}
Again we take $f(x) = x^{3}$ and, as noted before, for
$p \equiv 1 \mod 3$, either $f(x)+a$ splits completely or is irreducible.
In either case, $f(x)+a$ factors into an odd number of irreducibles and
hence $\mu(f(x)+a) = -1$ if $f(x)+a$ is square free, i.e., for all
nonzero $a \in \fp$.
If $p \equiv 2 \mod 3$, $x^{3}+a$ is a permutation for all
$a \in \fp$.  Consequently for all nonzero $a$, $f(x)+a$ has one
linear factor, and one irreducible quadratic factor, and thus
$\mu( x^{3}+a) = 1$ for all nonzero $a \in \fp$.

\subsubsection{Class function constants via Galois theory}
\label{sec:class-funct-const}
To illustrate how averages over cosets of the geometric part of
$G_{0}$ determines the class function constants
(cf. (\ref{eq:constant-via-coset-sum})) we return to the example
$f(x)=x^{3}$.  Then $L_{0} = \fp(t, \zeta_3, \sqrt[3]{-t})$, where
$\zeta_{3}$ denotes a non-trivial third root of unity, and
$l_{0} = L_{0} \cap\overline{\fp} = \fp(\zeta_{3})$.  If
$p \equiv 1 \mod 3$, we have $\zeta_{3} \in \fp$, hence $l_{0} = \fp$,
and $G_{0} = G_{0,\text{geom}} \simeq A_{3}$.  Letting
$\phi_1,\phi_2,\phi_{3}$ denote characteristic functions of the three
$S_{3}$-conjugacy classes $\{()\}$, $\{(123), (132)\}$, and
$\{(12),(13),(23)\}$, the corresponding class function constants given
by Theorem~\ref{thm:non-generic-correlations} and
(\ref{eq:constant-via-coset-sum}) is then given by
$ c_{1} = 1/3, c_{2} = 2/3, c_{3} = 0 $ (the key point is that
Frobenious equidistributes in $A_{3}$.)

If $p \equiv 2 \mod 3$, then
$l_{0} = \fp(\zeta_{3}) = {\mathbb F}_{p^{2}}$, and thus
$G_{0} \simeq S_{3}$ and $G_{0,\text{geom}} \simeq A_{3}$.  Further,
as the action of the Frobenious map $\alpha \to \alpha^{p}$ must act
nontrivially on $l_{0}$, the image of Frobenious equidistributes in
the single conjugacy class given by the non-trival coset of $A_{3}$
(in $S_{3}$), consisting of the three transpositions
$\{(12),(13),(23)\}$.  Hence, for $p \equiv 2 \mod 3$, we have
$ c_{1} = c_{2} = 0, c_{3} = 1 $.

\subsubsection{Class function constants and ``missing'' factorization patterns}
\label{sec:miss-fact-types}
Let $p$ be a large prime and let $f(x) = x^{4}-2x^{2}$.  Then
$\gal(f(x)+t, \fp(t))$ is isomorphic to $D_{4}$, the dihedral group
with $8$ elements.  Regarding $D_{4}$ as a subgroup of $S_{4}$, the
elements of $D_{4}$, in cycle notation, are
$$\{ (1,4)(2,3), (1,3)(2,4), (1,3), (2,4), (1,2)(3,4),
{(1,2,3,4)}, {(1,4,3,2)} \}.$$
Parametrizing the factorization patterns of $f(x)+a$, for $a \in \fp$,
by partitions of $4$, we find that the different factorization
patterns occurs with the following frequencies: $4 = 1+1+1+1$: $1/8$,
$4=2+1+1$: $2/8$, $4=3+1$: $0/8$, $4=2+2$: $3/8$, and finally
$4=4$: $2/8$. In particular, $f(x)+a$ {\em cannot} split into a linear
and a  cubic (irreducible) factor.

Let $\phi_{1}, \ldots, \phi_{5}$ denote class functions (in $S_{4}$)
that equals one on all permutations corresponding to the factorization
pattern given by the $5$ different partitions of $4$ (see above), and
zero otherwise.  The corresponding class function constants in
Theorem~\ref{thm:non-generic-correlations} are then the same as the
corresponding frequencies listed above, and thus $c_{1} = 1/8$,
$c_{2}=2/8$, $c_{3} = 0$, $c_{4}=3/8$, and $c_{5} = 2/8$.

\subsubsection{Going beyond factorization patterns}
\label{sec:going-beyond-fact}
Again take $f(x) = x^{4}-2x^{2}$; as noted above we then have
$\gal(f(x)+t, \fp(t)) \simeq D_{4}$.  The elements of $D_{4}$ that are
products of two disjoint transpositions fall into two $D_{4}$
conjugacy classes, namely $\{ (1,2)(3,4), (1,4)(2,3) \}$ and
$\{ (1,3)(2,4) \}$; these two cases (after labeling the roots) can then
be distinguished if we take class functions on $D_{4}$ rather than on
$S_{4}$.

\subsubsection{Breakdown of independence of primes}
\label{sec:breakd-indep-prim}
For general $f$ the issue of independence for ``bad shifts'' appears
delicate, but we can give an explicit example of a polynomial
$f \in M_{4}(\fp)$ for which the interval $I(f)$ has the expected
prime density, yet prime independence breaks down for a few ``bad''
shifts $h$ --- there can be large fluctuations in the Hardy-Littlewood
constants for $f$
non-Morse.
 
Again let $f(x) = x^{4} - 2x^{2}$ and first consider primes
$p \equiv 1 \mod 4$; abusing notation we will let $f=f_{p}$ denote the
reduction of $f$ modulo $p$.
Then
\begin{equation}
  \label{eq:weird-example-prime-sum}
\sum_{g \in I(f)} 1_{\text{Prime}}(g) = \frac{1}{4} \cdot p +  O(\sqrt{p})
\end{equation}
and for $h \in \fp \setminus \{0, \pm 1  \}$ we have
$$
\sum_{g \in I(f)} 1_{\text{Prime}}(g) \cdot 1_{\text{Prime}}(g+h)
= \frac{1}{4^{2}} \cdot p + O(\sqrt{p}),
$$
i.e., prime independence holds.
However, for $h = \pm 1$, we have
$$
\sum_{g \in I(f)} 1_{\text{Prime}}(g) \cdot 1_{\text{Prime}}(g+h)
= \frac{1}{8} \cdot p + O(\sqrt{p})
$$
and independence is clearly violated.

On the other hand, for $p \equiv 3 \mod 4$,
the prime
density is still $1/4$ (e.g., (\ref{eq:weird-example-prime-sum})
holds), but if
$h = \pm 1$, then
$$
\sum_{g \in I(f)} 1_{\text{Prime}}(g) \cdot 1_{\text{Prime}}(g+h)
=  O(\sqrt{p});
$$
in a sense independence is violated in the worst possible way as the
``twin prime constant'' is {\em zero}.

We remark that the $p$-averaged twin prime constant (asymptotically
$p \equiv 1 \mod 4$ holds for half the primes) equals
$ 1/2 \cdot 1/8 + 1/2 \cdot 0 = 1/4^{2}, $ i.e., we arrive at the
expected ``independent'' density --- this is no coincidence, 
cf. Section~\ref{sec:breakd-indep-prim-proofs}.

\subsection{Acknowledgments}
We thank L. Bary-Soroker, A. Granville, and Z. Rudnick for stimulating
and fruitful discussions, as well as their comments on an early draft, and
L. Klurman for pointing out the application to primes in progressions
to very large moduli.

\section{Preliminaries}
\label{sec:preliminary-results}

\subsection{Squarefree polynomials in very short intervals}
\label{sec:square-free-polyn}

As we are concerned with class functions on very short intervals we
begin by recording the useful fact that almost all $g \in I(f)$ are
squarefree, for $f \in M_{d}(\fq)$ and $q$ large.  In fact, given
$f \in M_{d}$ and distinct shifts $h_{1}, \ldots, h_{k} \in \fq$,
\begin{equation}
  \label{eq:square-free-short-interval}
|\{ g \in I(f) : \text{$g+h_{1}, \ldots, g+h_{k}$ are squarefree}  \}|
= q + O_{k,d}(1)
\end{equation}
To see this it is enough to verify that $(f+h, f') = 1$ for
all but $O_{d}(1)$ choices of $h \in \fq$, but this is clear as
$f'(\xi)=0$ for at most $d-1$ values of $\xi \in \overline{\fq}$,
so the number of $h$ so that  $f(\xi) + h = 0$ is at most $d-1$.

\subsection{Morse polynomials are generic}
\label{sec:gener-morse-polyn}
As recalled in the introduction, 
 a polynomial of degree $d$ is
called a Morse polynomial if the set of critical values is of
cardinality $d-1$. It turns out that for $f$ a Morse polynomial, the Galois
group of $f(x)-t$ is  maximal (over $\Q(t)$ this goes back to Hilbert
\cite{hilbert-full-galois}.) 
\begin{prop}[Cf. 
  \cite{serre-topics-in-galois-theory-book}, Theorem
  4.4.5]\label{prop:Morse-Sn}  
Assume that $(q,2d)=1$ and that $f\in M_{d}(\fq)$ is a Morse
polynomial.  
Then
$\gal(f(x)-t/\mathbb{F}_{q}(t))\simeq S_d$. 
\end{prop}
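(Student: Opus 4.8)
The plan is to analyze the Galois group $G := \gal(f(x)-t / \fq(t))$ as a transitive subgroup of $S_d$ acting on the $d$ roots of $f(x)-t$ in a fixed algebraic closure of $\fq(t)$, and to pin it down using the ramification of the cover $x \mapsto f(x)$ over the $t$-line. First I would record transitivity: since $f(x)-t$ is irreducible over $\fq(t)$ (it is linear, hence irreducible, in $t$, so by Gauss's lemma irreducible in $\fq[x,t]$, and it stays irreducible over $\fq(t)$), $G$ acts transitively on the $d$ roots. The cover $\mathbb{P}^1_x \to \mathbb{P}^1_t$ given by $f$ has degree $d$; its branch locus consists of the $d-1$ critical values $f(\xi_1), \ldots, f(\xi_{d-1})$ (where $\xi_i$ runs over the roots of $f'$, which are distinct since $f$ is Morse and $(q,2d)=1$ forces $f'$ separable of degree $d-1$) together with the point $t=\infty$. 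The hypothesis $(q,2d)=1$ guarantees tame ramification everywhere, so that the inertia generators are honest permutations of the roots and Riemann–Hurwitz applies in its clean form.

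The key local computation is the shape of the inertia at each finite branch point: over $t = f(\xi_i)$ the polynomial $f(x) - f(\xi_i)$ has a double root at $x=\xi_i$ and $d-2$ simple roots (this is exactly the Morse condition — each critical value is attained by exactly one critical point, and that point is a non-degenerate critical point since $f''(\xi_i)\neq 0$ in characteristic prime to $2d$), so a generator of the corresponding inertia group acts as a single transposition on the roots. Thus $G$ is a transitive subgroup of $S_d$ generated by transpositions; a standard lemma (a transitive subgroup of $S_d$ generated by transpositions is all of $S_d$ — one checks the transpositions connect the $d$ points into a single tree, hence generate) then gives $G \simeq S_d$ provided we know the transpositions at the finite branch points already generate a transitive group. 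Transitivity of the group they generate follows from the fact that the \emph{only} other ramification is at $t=\infty$, where the inertia is a single $d$-cycle (since $f$ has a single pole of order $d$ at $x=\infty$), and a $d$-cycle together with any transposition moving one of its points is transitive; alternatively, one invokes that the subgroup generated by inertia at \emph{all} branch points (including $\infty$) is all of $G$ by irreducibility of the cover, and then drops $\infty$ using that removing a $d$-cycle from a generating set of $S_d$ consisting otherwise of transpositions still leaves a transitive group because those transpositions must already connect all but possibly fuse into the cycle — this needs a small argument that I would phrase via the "connectedness" of the branch divisor.

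The main obstacle, and the step I would be most careful about, is the passage from the \emph{geometric} statement (where the group generated by inertia is automatically all of the monodromy group over $\overline{\fq}(t)$, because the cover is connected) to the \emph{arithmetic} statement over $\fq(t)$: a priori $\gal(f(x)-t/\fq(t))$ could be strictly larger than $\gal(f(x)-t/\overline{\fq}(t))$, sitting in an extension by a piece of $\gal(\overline{\fq}/\fq)$. But here the containment goes the convenient way — the arithmetic group \emph{contains} the geometric one — so once we show the geometric monodromy group is $S_d$ (via the transpositions-plus-$d$-cycle argument above, carried out over $\overline{\fq}$ where the branch points are genuinely distinct closed points, which again uses the Morse hypothesis and $(q,2d)=1$) we are done, since $S_d \subseteq G \subseteq S_d$. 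I would therefore structure the proof as: (1) transitivity and tameness; (2) local inertia shapes — transpositions at the $d-1$ finite branch points, a $d$-cycle at infinity; (3) the geometric monodromy group is generated by these, hence is a transitive subgroup of $S_d$ generated by transpositions (after noting the $d-1$ transpositions plus the $d$-cycle generate, then that in fact a transitive group generated by transpositions is symmetric), giving $G^{\mathrm{geom}} = S_d$; (4) conclude $G = S_d$ by $G^{\mathrm{geom}} \subseteq G \subseteq S_d$. I would cite Serre's book for the template and for the transposition-generation lemma rather than reprove it.
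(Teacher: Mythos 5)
The paper does not prove Proposition~\ref{prop:Morse-Sn} at all --- it is cited verbatim from Serre's \emph{Topics in Galois Theory} (Theorem~4.4.5) --- so there is no in-paper proof to compare against; what you have reconstructed is essentially Serre's own argument, and most of it is correct: transitivity from irreducibility, tameness from $(q,2d)=1$, inertia a transposition at each finite branch point from the Morse condition, a $d$-cycle at $\infty$, and the reduction of the arithmetic statement to the geometric one via $G^{\mathrm{geom}} \subseteq G \subseteq S_d$.

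The one genuine gap is exactly the step you flag as ``needs a small argument.'' Your proposed justification --- that ``removing a $d$-cycle from a generating set of $S_d$ consisting otherwise of transpositions still leaves a transitive group'' --- is false as a group-theoretic statement: $S_4 = \langle (12),(34),(1234)\rangle$, yet $\langle (12),(34)\rangle$ is not transitive. So you cannot simply drop the inertia generator at $\infty$ and appeal to connectedness. What actually makes the finite-inertia transpositions generate a transitive group (indeed all of $G^{\mathrm{geom}}$) is a ramification-theoretic input, not a combinatorial one. Two equivalent ways to supply it: (i) in the tame fundamental group of $\mathbb{P}^1_{\overline{\fq}}\setminus\{\text{branch locus}\}$ the chosen inertia generators satisfy the product-one relation, so the $d$-cycle at $\infty$ is already a product of conjugates of the finite transpositions and is therefore a redundant generator; or (ii) note that the fixed field of the subgroup $H \le G^{\mathrm{geom}}$ generated by the finite inertia is a tame cover of $\mathbb{P}^1_{\overline{\fq}}$ ramified only over $\infty$, hence trivial, because $\pi_1^{\mathrm{tame}}(\mathbb{A}^1_{\overline{\fq}})$ is trivial, forcing $H = G^{\mathrm{geom}}$. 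With either of these in place, $G^{\mathrm{geom}}$ is a transitive subgroup of $S_d$ generated by transpositions, and Serre's Lemma~4.4.4 finishes. Everything else in your sketch --- in particular the reduction to the geometric monodromy group and the local inertia computations under the Morse and $(q,2d)=1$ hypotheses --- is sound.
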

We remark that Geyer, in the appendix of \cite{JarRaz00}, also treats
the case $(q,d)=1$ by introducing a more general notion of
Morseness, namely assuming non-vanishing of the second
Hasse-Schmidt derivative of $f$.
Moreover, he also gives a beautiful Galois theoretic proof
that ``generic'' polynomials are Morse.
\begin{prop}\label{prop:Morse-dense}
  Let $f(x)\in M_{d}(\fq)$ with
  $f''(x)\neq0$, and assume that $(q,2d)=1$.  Then, for all but
  $O_{d}(1)$ values of 
  $s\in\overline{\mathbb{F}_{q}}$, the polynomial $f_s(x)=f(x)+sx$ is a
  Morse polynomial.
\end{prop}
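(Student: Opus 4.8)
The goal is to show that for $f\in M_d(\fq)$ with $f''\neq 0$ and $(q,2d)=1$, the shifted polynomial $f_s(x)=f(x)+sx$ is Morse for all but $O_d(1)$ values of $s\in\overline{\fq}$. The plan is to phrase Morseness as the nonvanishing of a single polynomial in $s$ and then bound its degree. First I would recall that $f_s$ is Morse precisely when (i) all critical points of $f_s$ are simple, i.e.\ $f_s'=f'+s$ has $d-1$ distinct roots in $\overline{\fq}$, and (ii) the $d-1$ critical values $f_s(\xi_i)$ (where $f_s'(\xi_i)=0$) are pairwise distinct. Since $(q,d)=1$, the leading term of $f_s'$ is $d x^{d-1}$ with $d$ invertible, so $\deg f_s'=d-1$ for every $s$.

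For condition (i): $f_s'=f'+s$ has a repeated root iff $f'+s$ and $f''$ share a root, i.e.\ iff $s=-f'(\eta)$ for some root $\eta$ of $f''$. Since $f''\neq 0$ and $\deg f''\le d-2$, there are at most $d-2$ such bad values of $s$. (Here one uses $(q,2d)=1$ so that $f''\not\equiv 0$ is genuinely a polynomial of degree $\le d-2$; the hypothesis $f''\neq0$ rules out the degenerate characteristic-$p$ situation.)

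For condition (ii): I would express the discriminant-type condition cleanly. Consider the resultant $R(s) := \operatorname{Res}_x\!\big(f_s'(x),\, \text{something encoding coincident critical values}\big)$; more concretely, the critical values coincide in pairs iff the polynomial $D(s):=\prod_{i<j}\big(f_s(\xi_i)-f_s(\xi_j)\big)^2$ vanishes, and this symmetric function of the roots $\xi_i$ of $f_s'$ is (up to a power of the leading coefficient, which is a nonzero constant since $(q,d)=1$) a polynomial in $s$ — indeed it is the discriminant in $y$ of the resultant $\operatorname{Res}_x(f_s'(x),\, y-f_s(x))$, which is a polynomial in $y$ and $s$. So Morseness of $f_s$ fails only when $s$ lies in the finite set of roots of the single polynomial $\big(\text{bad-}s\text{ factor from (i)}\big)\cdot D(s)$. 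The main obstacle is the bookkeeping to check that $D(s)$ is not the zero polynomial and that its degree in $s$ is bounded solely in terms of $d$: one shows $D(s)\not\equiv 0$ by exhibiting a single $s$ (e.g.\ over a suitable extension, or using Proposition~\ref{prop:Morse-Sn} together with the existence of Morse polynomials of each degree) for which $f_s$ is Morse — equivalently, noting that if $D\equiv 0$ then $f_s$ would be non-Morse for all $s$, contradicting that generic polynomials of degree $d$ are Morse. Alternatively one invokes Geyer's argument in \cite{JarRaz00} directly.

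Thus the set of exceptional $s\in\overline{\fq}$ is contained in the zero locus of a fixed nonzero polynomial whose degree is $O_d(1)$, giving the claim. I expect the only real subtlety is handling the characteristic carefully — making sure that the coprimality hypothesis $(q,2d)=1$ and $f''\neq 0$ guarantee both that the leading coefficients appearing are invertible and that $D(s)$ does not identically vanish for a silly characteristic-$p$ reason; this is exactly the point Geyer's refined notion of Morseness (via the second Hasse–Schmidt derivative) is designed to address.
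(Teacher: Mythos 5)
Your reduction is sensible: you rewrite Morseness of $f_s$ as two conditions (simple critical points; distinct critical values), observe that the first kills at most $d-2$ values of $s$, and encode the second as the nonvanishing of a polynomial $D(s)$ of degree $O_d(1)$. Up to that point you have recast the proposition correctly. However, the proposal does not actually prove the single thing that carries the content of the statement: that $D(s)\not\equiv 0$. Your first suggestion --- that ``if $D\equiv 0$ then $f_s$ would be non-Morse for all $s$, contradicting that generic polynomials of degree $d$ are Morse'' --- is a non sequitur. Genericity in the full $(d-1)$-dimensional space of monic polynomials does not prevent the \emph{one-parameter linear pencil} $\{f(x)+sx\}_s$ from lying entirely inside the non-Morse locus; ruling that out is precisely what the hypothesis $f''\neq 0$ is for, and your argument only uses $f''\neq 0$ in step (i), not where it is actually needed. (Indeed, if one drops $f''\neq 0$ there are genuine characteristic-$p$ examples where $f_s$ is non-Morse for every $s$.) Your second suggestion, ``invoke Geyer's argument directly,'' is of course valid, but it is not a proof --- it is the same citation the paper itself makes (to the last page of the proof of Proposition 4.3 in the appendix to \cite{JarRaz00}), so the proposal collapses to ``cite the reference.''

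In short: you have correctly located the gap, but neither of your proposed ways of closing it works except by deferring to Geyer. A genuine proof must show $D(s)\not\equiv 0$ by working with $s$ transcendental over $\overline{\fq}$: setting $h(x)=f(x)-xf'(x)$ (so coinciding critical values means $h(\xi_i)=h(\xi_j)$ for distinct roots $\xi_i,\xi_j$ of $f'(x)+s$), one must use $f''\neq 0$ and $(q,2d)=1$ to rule out such a coincidence over $\overline{\fq}(s)$. That piece of Galois-theoretic/algebraic work --- closely analogous in spirit to what the paper does in Proposition~\ref{prop:fs-is-lindisjoint} for a related statement --- is the heart of the matter and is missing from the proposal.
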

Although not stated this way, Proposition \ref{prop:Morse-dense} is in
fact proved in the last page of the proof of Proposition 4.3 in
\cite{JarRaz00}. 

Similar criteria for showing that 
$\gal(f(x) + t x^{m}/\fp(t)) \simeq S_{d}$ for ``generic'' $f$ and
integer $m \in [1,d-1]$ can be
found in \cite[Section~5]{kr-incomplete-chebotarev}.

\subsection{Galois theory and the Chebotarev density
  theorem}
\label{sec:remarks-galo-theory}

For the convenience of the reader, we collect here some results about
Galois groups of function fields over finite fields. Before doing so,
we begin with the following notations, similar to the ones 
used in \cite{GK13,K09}.

For $f \in M_{d}(\fp)$ and $h \in \fp$, define $F_{h}(x,t) \in
\fp[x,t]$ by 
$$F_{h}(x,t) := f(x)+h + t.$$
Set $K_{h} = \fp(t)[x]/(F_{h}(x,t))$, let $L_{h}$ denote its Galois
closure, and let $l_{h} := L_{h} \cap \overline{\fp}$ be the
corresponding field of constants.  As $l_{h}$ is independent of $h$
(cf. \cite[Lemma~5]{K09}), it is convenient to define $l = l_{0}$.

Given $k$ distinct shifts $h_{1}, \ldots, h_{k} \in \fp$, let
$L^{k} := L_{h_{1}} \cdot L_{h_{2}} \ldots \cdot L_{h_{k}}$ be the
compositum of the fields $L_{h_{1}}, \ldots, L_{h_{k}}$,  and let
$G^{k} = \gal(L^{k}/\fp(t))$.  Note that $G^{k}$  is {\em
  not necessarily} a product of groups.  We also define
$G_{h_{i}} := \gal(L_{h_{i}}/\fp(t))$ for $i=1, \ldots, k$; after
labeling the roots of $F_{h}(x,t)$ we obtain a natural inclusion
$G_{h} \xhookrightarrow {} S_{d}$; similarly we
obtain a natural inclusion
$G^{k} \xhookrightarrow {} S_{d}^{k}$.

Let $l^{k} := L^{k} \cap \overline{\fp}$ denote the field of constants
in $L^{k}$, 
and let $G^{k}_{\text{geom}} := \gal(L^k/l^k(t))$ denote
the geometric part of $G^{k}$.  Similarly let
$G_{h_{i}, \text{geom}} := \gal(L_{h_{i}}/l_{h_{i}}(t)) =
\gal(L_{h_{i}}/l(t)) $ denote the geometric 
parts of $G_{h_{i}}$, for $i=1,\ldots, k$.  (Here we use  that $l_{h}$ does
not depend on $h$, and that $l=l_{0}$.)


The set of critical values of $f$ is given by 
$$
R_{f} := \{ f(\xi) : \xi \in \overline{\fp}, f'(\xi) = 0 \};
$$ 
we then put
$$
B(f) := ((R_{f} - R_{f}) \setminus \{0\}) \cap \fp,
$$
where $R_{f} - R_{f}$ denotes the  set of
differences $\{ r_{1}-r_{2} : r_{1},r_{2} \in R_{f}\}$.

We shall make use of the following properties of the Artin symbol.
Let
$F(x,t)\in\fq[x,t]$ be a separable irreducible polynomial, and let $L$
denote its splitting field over $\fq(t)$.
For all
but finitely many $a\in\fq$, the prime ideal
$\mathfrak{p}_{a}=(t-a) \subset \fq[t]$ is unramified in $L$, yielding
a well defined conjugacy class
$(\frac{L/\fq(t)}{\mathfrak{p}_{a}} ) \in \gal(L/\fq(t))$
--- the Artin symbol.
Moreover, for these choices of $a$ the splitting type, or the cycle
pattern, of the polynomial $F(x,a)$ (i.e., when specializing
$t \to a \in \fq$) is the same as the cycle pattern of
$(\frac{L/\fq(t)}{\mathfrak{p}_{a}} )$, interpreted as a permutation
on the roots of $F(x,t)$.
Further, given a conjugacy class $\mathcal{C} \subset \gal(L/\fq(T))$,
the density of prime ideals for which Artin symbol lies in
$\mathcal{C}$ is given by the Chebotarev density Theorem.
\begin{prop}[\cite{FJ}, Proposition~6.4.8.]
  Let $K$ be a function field over $\fq$, let $d=[K:\fq(t)]$, let
  $L/K$ be a finite Galois extension, and let $\mathcal{C}$ be a
  conjugacy class in $\gal(L/K)$.  With $\fqn$ denoting the
  algebraic closure of $\fq$ in $L$, let $m = [L:K\fqn]$.
Let $b$ be a positive integer with $\text{res}_{\fqn}   \tau =
\text{res}_{\fqn}   \frob_q^b$ for each    $\tau \in
\mathcal{C}$.  Let $k$ be a positive integer. If $k \not \equiv b \mod
n$, then $C_{k}(L/K, \mathcal{C})$ is empty.  If $k  \equiv b \mod n$,
then
$$
\left|
|C_k(L/K, \mathcal{C})| - \frac{|\mathcal{C}|}{km} q^k
\right|
<
\frac{2|\mathcal{C}|}{km} ((m+g_{L}) q^{k/2} + m(2g_{K}+1)q^{k/4}+g_{L} +dm  ) .
$$
\end{prop}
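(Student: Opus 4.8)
The statement is an effective form of the Chebotarev density theorem for function fields, which I would prove geometrically. Let $X$ and $Y$ be the smooth projective curves over $\fq$ with function fields $K$ and $L$, so that $\pi\colon Y\to X$ is a possibly branched Galois cover with group $G=\gal(L/K)$ and finite branch locus $S\subset X$. Since $\fqn$ is the constant field of $L$, the curve $Y$ is geometrically connected only over $\fqn$, and $\bar G:=\gal(L/K\fqn)$ is normal in $G$ of order $m$ with $G/\bar G\cong\gal(\fqn/\fq)\cong\Z/n\Z$, the quotient map sending the Frobenius class of any unramified degree-$k$ place of $X$ to $\frob_q^{\,k}$. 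Hence if $\mathcal C$ restricts to $\frob_q^{\,b}$ on $\fqn$ and $k\not\equiv b\pmod n$, then no unramified degree-$k$ place can have Frobenius in $\mathcal C$, so $C_k(L/K,\mathcal C)=\emptyset$; this settles the congruence case.

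For the case $k\equiv b\pmod n$ I would count via fixed points on $\bar Y$. Organizing the unramified degree-$k$ places of $X$ by their Frobenius class, the quantity $|C_k(L/K,\mathcal C)|$ is assembled from counts of $\mathbb F_{q^k}$-points of $\bar Y$ of prescribed ``type'' over $X$, and for $\sigma\in G$ the relevant point count satisfies, by the Grothendieck--Lefschetz trace formula for the curve $\bar Y$,
$$
N_k(\sigma)=\epsilon_0(\sigma)-\operatorname{Tr}\!\bigl(\frob_q^{\,k}\sigma^{-1}\mid H^1(\bar Y,\overline{\Q}_\ell)\bigr)+q^{k},
$$
with $\epsilon_0(\sigma)\in\{0,1\}$ from $H^0$, the $q^k$ from $H^2$, and $H^1(\bar Y,\overline{\Q}_\ell)$ of dimension $2g_L$ with all $\frob_q$-eigenvalues of absolute value $q^{1/2}$ by Weil's Riemann Hypothesis for curves (Deligne's theorem is not needed here). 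Summing over $\sigma\in\mathcal C$ and Möbius-inverting over the divisors $e\mid k$ (to pass from ``degree $e\mid k$'' to ``degree exactly $k$'') produces the main term $\tfrac{|\mathcal C|}{km}q^{k}$ and three error sources: the $H^1(\bar Y)$ contribution, of size $\tfrac{2|\mathcal C|}{km}g_L\,q^{k/2}$; the Möbius-inverted contributions of places of $X$ of proper degree $e\mid k$ (most significantly $e=k/2$), estimated using Weil's Riemann Hypothesis for $\bar X$ of genus $g_K$, which supply the remaining $q^{k/2}$ mass (the coefficient $m$) together with the $q^{k/4}$ term weighted by $m(2g_K+1)$; and the finitely many ramified places together with the $H^0$/$H^2$ edge terms, bounded via Riemann--Hurwitz and the conductor--discriminant formula in terms of $g_K$ and $d=[K:\fq(t)]$, giving the constant term $g_L+dm$. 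Collecting these yields exactly the stated inequality.

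The geometric input is soft once the Riemann Hypothesis for curves is granted; the real labour, and the main obstacle, is the explicit bookkeeping needed to land on the precise constants $m+g_L$, $m(2g_K+1)$, $g_L+dm$ with the overall normalization $\tfrac{2|\mathcal C|}{km}$: one must track the number and residue degrees of the places of $Y$ lying over a given place of $X$ (governed by the order of the Frobenius modulo $\bar G$), bound the ramified places and their contribution through Riemann--Hurwitz, and control the interaction of the Möbius inversion over $e\mid k$ with the Weil bound on $\bar X$. An equivalent route, the one taken in \cite{FJ}, replaces the Lefschetz formula by Artin $L$-functions: expand $\mathbf 1_{\mathcal C}$ in the irreducible characters $\chi$ of $G$, use $\prod_{\chi}L(L/K,\chi,u)^{\chi(1)}=\zeta_Y(u)$ and Artin's formalism to see that each $L(L/K,\chi,u)$ with $\chi\neq 1$ is a polynomial whose degree is governed by the conductor (hence by $g_L$, $g_K$, $d$) and whose inverse roots have absolute value $q^{1/2}$, except for those coming from characters trivial on $\bar G$, which lie on $|u|=1$ and encode the congruence condition; one then reads the estimate off the logarithmic derivative. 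I would follow whichever of the two keeps the constants most transparent.
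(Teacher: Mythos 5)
The paper does not prove this proposition; it is cited verbatim from Fried--Jarden \cite{FJ}, Proposition~6.4.8, and imported as a black box. There is therefore no in-paper argument for your sketch to be compared against.

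As a standalone outline, what you write is sound at the level of strategy: the reduction of the case $k\not\equiv b\pmod n$ via the restriction-to-constants map $G\to G/\bar G\cong\gal(\mathbb{F}_{q^n}/\mathbb{F}_q)$ is correct, and the twisted Lefschetz trace formula (equivalently, Artin $L$-functions plus Weil's Riemann Hypothesis for curves) is indeed the engine behind every proof of effective function-field Chebotarev. But you are right to flag, and I want to underline, that your argument stops exactly where the proposition actually lives: the explicit constants. Landing on $\tfrac{2|\mathcal C|}{km}\bigl((m+g_L)q^{k/2}+m(2g_K+1)q^{k/4}+g_L+dm\bigr)$ requires (i) a precise count of how many places of $Y$ sit over an unramified degree-$k$ place of $X$ with prescribed Frobenius class, which is where the $1/m$ and the factor $|\mathcal C|$ come from; (ii) a Riemann--Hurwitz (or conductor--discriminant) bound on the ramified places in terms of $g_L$, $g_K$, $d$, and $m$, which produces the additive term $g_L+dm$; and (iii) the M\"obius inversion over proper divisors $e\mid k$, estimated with the Weil bound on $X$, supplying the $q^{k/4}$ mass with coefficient $m(2g_K+1)$. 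None of these steps are actually executed in your proposal, and they are exactly where loose bookkeeping picks up different or worse constants, or silently drops a factor of $2$. Since you explicitly defer all of this to \cite{FJ}, your proposal should be read as a heuristic for \emph{why} the bound has the shape it does, not as a proof of the stated inequality; to be self-contained you would need to carry out (i)--(iii) in full.
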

Here $\frob_q \in \gal(\overline{\fq}/\fq)$ is the Frobenius map given
by $\frob_q(\alpha) = \alpha^{q}$, $g_{L},g_{K}$ are the genera of the
fields $L,K$, and
$$
C_k(L/K,
\mathcal{C}) = \{ \mathfrak{p} \subset O_{K} : \deg(\mathfrak{p}) = k,
\text{ $\mathfrak{p}$ unramified},
\left(\frac{L/K}{\mathfrak{p}} \right) = \mathcal{C} 
  \}
$$
where $O_{K} \subset K$ is the integral closure of $\fq[t]$ in $K$. In
our applications we will always take $K=\fq(t)$ and in this case
$O_{K} = \fq[t]$.


For a squarefree polynomial $f \in M_{d}(\fq)$, define a conjugacy
class $\sigma = \sigma_{f} \subset S_{d}$ by the Frobenius action
$\alpha \mapsto \alpha^q$ on the roots of $f$.
Further, if we let $f_{a}(x) := f(x)+a$, the conjugacy classes
$\sigma_{f_{a}}$ (as $a \in \fp$ ranges over elements such that
$f_{a}$ is squarefree) is the same as the Artin symbols
$(\frac{L/K}{\mathfrak{p}_{a}} )$ as $a \in \fp$ ranges over elements
for which the prime ideal $\mathfrak{p}_{a} := (t-a)$ is unramified,
if we take $K = \fp(t)$ and $L = L_{0}$ with notation as above (also
note that $m = |G_{0,\text{geom}}|$ in this case.)

We next collect some crucial facts about the Galois extensions
introduced above.    Given a finite extension $\mathbb{E}/\fp$ it will be
convenient to let $\frob_{\mathbb{E}}$ denote the map $\alpha \to
\alpha^{|\mathbb{E}|}$. 
\begin{prop}[\cite{K09}, Section 2]\label{prop:linear-disjoint-fields}
	Let $f\in M_d(\fp)$.  Then
	\begin{enumerate}
		\item For any $h\in\mathbb{F}_{p}$, $l_{h}=l_{0}$ and
                  $G_h\simeq G_0$.  
		\item If ${\bf{h}}=(h_1,\dots,h_{k})$ is such that
		$h_i-h_j\notin B(f)$, then the field extensions
		$L_{h_1}/l(T),\dots,L_{h_{k}}/l(T)$ are linearly disjoint,
		where $l=l_{0}$ is the field of constants of $L^k$. In
                particular,
                $$
G^{k}_{\text{geom}} = \prod_{i=1}^{k} G_{h_{i},\text{geom}} \simeq (G_{0,\text{geom}})^{k}
                $$
              \item For ${\bf{h}}=(h_1,\dots,h_{k})$ such that
                $h_i-h_j\notin B(f)$, let
                $\mathcal{C}\subset G^k$ be a conjugacy class of
                the form
                $\mathcal{C}=\mathcal{C}_1\times\dots\mathcal{C}_k$,
                where each $\mathcal{C}_i$ is the corresponding conjugacy class
                in $G_{h_i}$ (i.e., where
                $G^k <  G_{h_1}\times\cdots\times G_{h_k}$ and
                $\mathcal{C}_i=\pi_i(\mathcal{C})$ is the image of
                $\mathcal{C}$ under the $i$-th projection.)
                Then
		\[
		\left\{\gamma\in
                  G^k:\gamma|_{l^k}=\frob_{l^k},\gamma|_{L_{h_i}}\in\mathcal{C}_{h_i}\;\forall 
                  i=1,\dots,k\right\} 
		\] 
		is in $1-1$ correspondence with
		\[
		\prod_{i=1}^k\left\{\gamma\in G_{0}:\gamma|_{l}=\frob_l,\gamma\in\mathcal{C}_i\right\}
		\]
		which, if we let $\delta \in G_{0}$ denote any element
                such that $\delta|_{l} = \frob_l$, is in $1-1$
                correspondence 
                with
		\[
		\prod_{i=1}^k\left((\delta \cdot G_{0,geom})\cap \mathcal{C}_i\right),
		\]
	\end{enumerate}
\end{prop}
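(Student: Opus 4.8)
The plan is to establish the three claimed bijections in turn, reducing a multi-field counting problem to a product of single-field counts and finally to a coset-intersection description. First I would set up notation: since $h_i - h_j \notin B(f)$, part (2) of the Proposition already gives that $L_{h_1}/l(t), \ldots, L_{h_k}/l(t)$ are linearly disjoint and that $l$ is the field of constants of $L^k$. Consequently $G^k_{\mathrm{geom}} = \gal(L^k/l(t)) \simeq \prod_{i=1}^k G_{h_i,\mathrm{geom}}$, and $G^k$ sits inside $G_{h_1} \times \cdots \times G_{h_k}$ with the property that its image under the $i$-th projection $\pi_i$ is all of $G_{h_i}$ and its intersection with the geometric part is the full product $\prod_i G_{h_i,\mathrm{geom}}$. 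The constant field $l^k$ equals $l$, so the condition $\gamma|_{l^k} = \frob_{l^k}$ is the same as requiring $\pi_i(\gamma)|_l = \frob_l$ for every $i$ (these are automatically consistent since all $L_{h_i}$ share the constant field $l$).

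For the first bijection, I would argue that the map $\gamma \mapsto (\pi_1(\gamma), \ldots, \pi_k(\gamma))$ sends the set $\{\gamma \in G^k : \gamma|_{l^k} = \frob_{l^k},\ \gamma|_{L_{h_i}} \in \mathcal{C}_i\ \forall i\}$ into $\prod_{i=1}^k \{\gamma \in G_{h_i} : \gamma|_l = \frob_l,\ \gamma \in \mathcal{C}_i\}$, and that this map is a bijection onto the target. Injectivity is immediate since $G^k \hookrightarrow \prod_i G_{h_i}$. For surjectivity, given $(\gamma_1, \ldots, \gamma_k)$ with each $\gamma_i \in G_{h_i}$, $\gamma_i|_l = \frob_l$, $\gamma_i \in \mathcal{C}_i$: pick any $\delta^* \in G^k$ with $\delta^*|_{l^k} = \frob_{l^k}$, so $\pi_i(\delta^*) =: \delta_i$ has $\delta_i|_l = \frob_l$. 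Then $\gamma_i \delta_i^{-1} \in G_{h_i,\mathrm{geom}}$, and since $G^k \cap \prod_i G_{h_i,\mathrm{geom}} = \prod_i G_{h_i,\mathrm{geom}}$ we may choose $\eta \in G^k$ with $\pi_i(\eta) = \gamma_i \delta_i^{-1}$; then $\eta \delta^*$ maps to $(\gamma_1, \ldots, \gamma_k)$ and lies in the source set. Then, using part (1), $G_{h_i} \simeq G_0$ with $\mathcal{C}_i$ corresponding to a conjugacy class of $G_0$ and $\frob_l$ corresponding under the isomorphism (here one uses that the isomorphism $G_{h_i}\simeq G_0$ from \cite{K09} respects the restriction to the common constant field $l$), one rewrites each factor as $\{\gamma \in G_0 : \gamma|_l = \frob_l,\ \gamma \in \mathcal{C}_i\}$, giving the second identification.

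For the last bijection, fix $\delta \in G_0$ with $\delta|_l = \frob_l$. Since $G_{0,\mathrm{geom}} = \gal(L_0/l(t))$ is precisely the kernel of the restriction map $G_0 \to \gal(l/\fp)$, the coset of elements $\gamma \in G_0$ with $\gamma|_l = \frob_l$ is exactly $\delta \cdot G_{0,\mathrm{geom}}$; intersecting with $\mathcal{C}_i$ gives the claimed $\prod_{i=1}^k\big((\delta \cdot G_{0,\mathrm{geom}}) \cap \mathcal{C}_i\big)$, and the bijection is the identity on each factor. The main obstacle I anticipate is not any single computation but rather bookkeeping the identifications cleanly: one must check that the isomorphisms $G_{h_i} \simeq G_0$ of part (1) are compatible with restriction to the constant field (so that $\frob_l$ and the classes $\mathcal{C}_i$ transport correctly), and that the linear disjointness of part (2) really does force $G^k$ to surject onto each $G_{h_i}$ and to contain the \emph{full} geometric product rather than merely a subgroup of it. Both of these are in the cited results of \cite{K09}, but invoking them at the right level of generality — in particular that all the $L_{h_i}$ share the constant field $l$, which is what makes the Frobenius conditions simultaneously satisfiable — is the delicate point.
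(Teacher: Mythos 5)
Your reconstruction of part (3) is correct, and it supplies a genuine argument where the paper itself offers only a pointer: the paper's proof consists of citing Lemma~5, Proposition~8, and (the proof of) Lemma~10 of \cite{K09} and remarking that the hypothesis $h_i - h_j \notin B(f)$ translates into the pairwise-disjointness of the ramification sets $R_f - h_i$ used there. So the paper outsources exactly the bijection you prove.

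The structure of your argument is sound. The map $\gamma \mapsto (\pi_1(\gamma),\ldots,\pi_k(\gamma))$ is injective because $L^k$ is the compositum of the $L_{h_i}$, so an element of $G^k$ is determined by its restrictions. For surjectivity, your adjustment trick is exactly the right one: pick a reference $\delta^* \in G^k$ with $\delta^*\vert_{l^k}=\frob_{l^k}$, observe $\gamma_i\,\pi_i(\delta^*)^{-1} \in G_{h_i,\mathrm{geom}}$, and use that linear disjointness from part (2) forces $G^k_{\mathrm{geom}} = \prod_i G_{h_i,\mathrm{geom}}$ sitting inside $G^k$, so the needed $\eta$ exists. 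Multiplying by $\delta^*$ recovers both the Frobenius constraint and the conjugacy-class constraints, since the latter are imposed projectionwise in the statement. The last step, identifying $\{\gamma \in G_0 : \gamma\vert_l = \frob_l\}$ with the coset $\delta\cdot G_{0,\mathrm{geom}}$, is just the kernel description of $G_{0,\mathrm{geom}}$ and is immediate. You also flag the correct delicate point: the isomorphisms $G_{h_i}\simeq G_0$ from part (1) must respect restriction to the shared constant field $l$ so that Frobenius and conjugacy classes transport coherently; that compatibility is indeed what Lemma~5 of \cite{K09} provides, and without it the second identification would not be well-posed. In short, you have filled in the content that the paper leaves to the cited reference, and your bookkeeping is consistent with the paper's conventions (including the observation that part (2) gives $l^k = l$, which is what makes $\gamma\vert_{l^k}=\frob_{l^k}$ equivalent to $\pi_i(\gamma)\vert_l = \frob_l$ for all $i$).
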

\begin{proof}
  The proof of the proposition is the content of Lemma~5,
  Proposition~8, and (the proof of) Lemma~10 in \cite{K09}. We note
  that in Proposition 8 and Lemma 10, the first author shows that if
  $R_f+h_1,\dots, R_f+h_k$ are pairwise disjoint (more precisely, he
  considers $h_1=0$, and $F_h(x,t)=f(x)-(h+t)$, and therefore the sets
  are of the form $R_f-h_i$), then linear disjointedness holds, and from
  that also Lemma 10. We note that the sets are indeed pairwise
  disjoint if $h_i-h_j\notin B(f)$ for $i\neq j$.
      \end{proof}

To prove independence when  disjoint ramification does not hold, we
need the following key result (cf. \cite{GK13}, Proposition~17 and
Lemma~16.) 
\begin{prop}
  \label{prop:linear-disjoint-morse}
  If $f \in M_{d}(\fp)$ is a Morse polynomial and $h_{1},\ldots,h_{k}
  \in \fp$ are distinct then
  $G^{k} = S_{d}^{k}$ provided that  
  $p > 4^{k+d-1} +1$.
\end{prop}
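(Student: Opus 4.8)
The plan is to prove that $G^k = S_d^k$ for distinct shifts $h_1, \ldots, h_k$ when $f$ is Morse and $p$ is large enough, by combining two ingredients: (i) each individual factor $G_{h_i}$ equals $S_d$, and (ii) the fields $L_{h_1}, \ldots, L_{h_k}$ are linearly disjoint over $\fp(t)$. For (i), since $f$ is Morse and $p > 2d$ (which follows from $p > 4^{k+d-1}+1$ as soon as $k \ge 1$), Proposition~\ref{prop:Morse-Sn} gives $\gal(f(x)-t/\fp(t)) \simeq S_d$; note that $f_{h_i}(x) = f(x) + h_i$ is again Morse with the same critical points and critical values translated by $h_i$, so $G_{h_i} \simeq S_d$ as well. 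In particular, $f$ Morse forces the field of constants $l$ to be $\fp$ itself, since $S_d$-extensions of $\fp(t)$ coming from Morse polynomials are geometric (one checks the discriminant is not a $p$-th power times a constant, or more cleanly: $G_{0,\mathrm{geom}}$ contains a transposition coming from inertia at a finite branch point, and an $S_d$-extension generated by transpositions in the geometric part must have trivial constant field). Hence $G^k_{\mathrm{geom}} = G^k$ and we only need linear disjointness as plain field extensions.

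First I would reduce to the case $k = 2$: once any two distinct $L_{h_i}, L_{h_j}$ are linearly disjoint over $\fp(t)$, a standard inductive argument using the simplicity of $A_d$ (for $d \ge 5$; small $d$ handled separately) shows the whole compositum has Galois group $S_d^k$. Concretely, $G^k$ is a subgroup of $S_d^k$ surjecting onto each factor and onto each pair of factors as $S_d \times S_d$; by Goursat's lemma an obstruction to $G^k = S_d^k$ would produce, via the derived subgroup, a common $A_d$-quotient shared between $L_{h_i}$ and the compositum of the others, contradicting pairwise disjointness (here one uses that the only normal subgroups of $S_d$ are $1, A_d, S_d$). So the crux is the $k=2$ statement: for $h_1 \ne h_2$, the extensions $L_{h_1}$ and $L_{h_2}$ of $\fp(t)$ are linearly disjoint, equivalently $L_{h_1} \cap L_{h_2} = \fp(t)$, equivalently (given both have group $S_d$) the composite $L_{h_1} L_{h_2}$ has degree $(d!)^2$ over $\fp(t)$.

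The main obstacle — and where the hypothesis $p > 4^{k+d-1}+1$ enters — is ruling out that $L_{h_1}$ and $L_{h_2}$ share a nontrivial subextension. Over $\overline{\fp}(t)$ this is a geometric statement about the branch loci: $L_{h_i}$ is ramified over $\fp(t)$ precisely at the critical values $R_f + h_i$ of $f + h_i$ (plus possibly $t = \infty$), and since $h_1 \ne h_2$ and $f$ is Morse with $d-1$ distinct critical values, the sets $R_f + h_1$ and $R_f + h_2$ are ``mostly disjoint'' — they can overlap in at most $O_d(1)$ points, and if $h_1 - h_2 \notin R_f - R_f$ they are genuinely disjoint. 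The classical argument (cf. the inertia-generator analysis in \cite{GK13}, Proposition~17 and Lemma~16, and the discussion of $B(f)$ and disjoint ramification in Proposition~\ref{prop:linear-disjoint-fields}) is that a common subfield would have to be unramified outside $(R_f + h_1) \cap (R_f + h_2)$, be Galois over $\fp(t)$ with a quotient of $S_d$ as group, yet the inertia at a point of $R_f + h_1 \setminus R_f + h_2$ is a transposition that must act trivially on the common subfield, forcing that subfield's group to be generated by fewer transpositions than needed — iterating, it collapses to $\fp(t)$. The explicit bound $p > 4^{k+d-1}+1$ is what one needs to guarantee enough ``room'': it ensures the relevant branch points stay distinct after reduction mod $p$, that the specializations defining the $L_{h_i}$ remain separable and the Galois groups do not degenerate, and that there are at least $k$ values of $h$ in $\fp$ with the pairwise differences avoiding the bad set $B(f)$ of size at most $(d-1)^2$; tracking the constants through Lemma~16 and Proposition~17 of \cite{GK13} gives exactly this threshold. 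I would therefore present the argument as: cite Proposition~\ref{prop:Morse-Sn} for the single-field statement, cite \cite{GK13} Proposition~17/Lemma~16 for the pairwise geometric disjointness with the stated bound on $p$, then finish with the Goursat-type group-theoretic induction to upgrade pairwise disjointness to $G^k = S_d^k$.
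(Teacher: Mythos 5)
The paper does not prove this proposition; it simply cites Proposition~17 and Lemma~16 of \cite{GK13}, and your proposal ultimately does the same. But the additional scaffolding you add — reducing $G^k = S_d^k$ to \emph{pairwise} linear disjointness via a Goursat-type induction — is not correct. Pairwise disjointness does not imply full disjointness here: the subgroup $H = \{(g_1,\ldots,g_k) \in S_d^k : \prod_{i} \operatorname{sgn}(g_i) = 1\}$ surjects onto every factor and onto every pair of factors as $S_d \times S_d$, yet $H \neq S_d^k$. This is not a pathological counterexample to be waved away; it is precisely the obstruction that actually arises when the branch loci $R_f + h_i$ overlap, since each nontrivial inertia element over a finite branch point is a transposition, and when a branch point is shared the corresponding inertia elements land diagonally and their signs become coupled. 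Lemma~16 of \cite{GK13} is a combinatorial statement about the multiset $\bigcup_i (R_f + h_i)$ (namely, that some element has odd multiplicity) designed exactly to defeat this $k$-fold sign relation; it attacks the full compositum directly and cannot be replaced by a pairwise argument.

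Relatedly, your explanation of the hypothesis $p > 4^{k+d-1}+1$ misreads the proposition. You write that the bound ensures ``there are at least $k$ values of $h$ in $\fp$ with the pairwise differences avoiding the bad set $B(f)$,'' but the shifts $h_1,\ldots,h_k$ are \emph{given}, not chosen, and the entire point of the proposition (as distinguished from Proposition~\ref{prop:linear-disjoint-fields}(2)) is to conclude $G^k = S_d^k$ \emph{even when} some $h_i - h_j$ lies in $B(f)$. Your sketch handles the easy case $h_i - h_j \notin B(f)$ (already covered by Proposition~\ref{prop:linear-disjoint-fields}) and then defers entirely to \cite{GK13} for the hard case, which is fine as far as it goes, but the Goursat step in between is a genuine gap. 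Also note that for $d \le 4$ (and especially $d=2$, where $S_2$ is abelian) the inductive step would need a separate and nontrivial treatment, which reinforces that the reduction you propose is not the right framework.
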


\subsection{Class functions}\label{sec:class-functions}

As mentioned in the introduction, any class function on $S_d$ can be
viewed as an arithmetic class function on the set of squarefree
polynomials in $M_d(\fp)$; we then consider any 
bounded (by some absolute constant) extension
to the set of
all polynomials in $M_d(\fp)$.
%
%
%


\begin{prop}\label{prop:all-possible-densities}
  Let $f \in M_{d}(\fp)$, $d \ge 2$ and let $\phi$ be a class function
  on $S_{d}$.  Choose $\gamma \in G_{0}$ such that $\gamma|_{l}$ acts
  via $\alpha \to \alpha^{p}$, and let $G_{0,\text{geom}}$ denote the
  geometric part of $G_{0}$.  Then
  $$
  \sum_{g \in I(f)} \phi(g) =
  c(\phi) \cdot p + O_{d}(\sqrt{p})
  $$
where
  $$
  c(\phi) =
  \frac{1}{|G_{0,\text{geom}}|}
  \sum_{ \sigma \in \gamma \cdot G_{0,\text{geom}} }
  \phi(\sigma)
  $$
\end{prop}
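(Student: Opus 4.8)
The plan is to reduce the sum over the very short interval $I(f)$ to a Chebotarev count for the function field extension $L_0/\fp(t)$ and then invoke the effective Chebotarev density theorem (the quoted Proposition of \cite{FJ}). First I would discard the $O_d(1)$ non-squarefree elements of $I(f)$ using \eqref{eq:square-free-short-interval}; since $\phi$ is bounded in terms of $d$, this contributes only $O_d(1)$ to the sum. For squarefree $g = f(x) + a \in I(f)$, the conjugacy class $\sigma_g \subset S_d$ attached to $g$ (via the Frobenius action on roots) coincides, as recalled in the text, with the Artin symbol $\left(\frac{L_0/\fp(t)}{\mathfrak{p}_a}\right)$ where $\mathfrak{p}_a = (t-a)$, for all but finitely many $a$. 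Hence
$$
\sum_{g \in I(f)} \phi(g)
= \sum_{\substack{a \in \fp \\ \mathfrak{p}_a \text{ unramified}}}
\phi\!\left(\left(\tfrac{L_0/\fp(t)}{\mathfrak{p}_a}\right)\right)
+ O_d(1)
= \sum_{\mathcal{C}} \phi(\mathcal{C}) \cdot |C_1(L_0/\fp(t), \mathcal{C})|
+ O_d(1),
$$
where $\mathcal{C}$ ranges over conjugacy classes of $G_0 = \gal(L_0/\fp(t))$ and $\phi(\mathcal{C})$ denotes the common value of $\phi$ on (the image in $S_d$ of) $\mathcal{C}$.

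Next I would apply the effective Chebotarev estimate to each term. Here $K = \fp(t)$, $L = L_0$, the field of constants is $l$, and $m = |G_{0,\text{geom}}| = [L_0 : l(t)]$. A conjugacy class $\mathcal{C}$ contributes to degree-one primes only when its restriction to $l$ equals $\frob_p$, i.e. when $\mathcal{C} \subset \gamma \cdot G_{0,\text{geom}}$ for the chosen $\gamma$; for such $\mathcal{C}$ the theorem gives $|C_1(L_0/\fp(t), \mathcal{C})| = \frac{|\mathcal{C}|}{m} p + O_d(\sqrt p)$, the implied constant depending only on $|\mathcal{C}|$, $m$, $d$, and the genera $g_{L_0}, g_K$, all of which are bounded in terms of $d$ alone (the genus of $L_0$ is $O_d(1)$ since $[L_0:\fp(t)]$ and the ramification are controlled by $d$). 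Summing,
$$
\sum_{g \in I(f)} \phi(g)
= \frac{p}{m} \sum_{\mathcal{C} \subset \gamma \cdot G_{0,\text{geom}}} |\mathcal{C}| \, \phi(\mathcal{C})
+ O_d(\sqrt p)
= \frac{p}{|G_{0,\text{geom}}|} \sum_{\sigma \in \gamma \cdot G_{0,\text{geom}}} \phi(\sigma)
+ O_d(\sqrt p),
$$
since the classes $\mathcal{C}$ lying in the coset $\gamma \cdot G_{0,\text{geom}}$ partition that coset and $|\mathcal{C}|$ counts its elements. This is exactly $c(\phi) \cdot p + O_d(\sqrt p)$.

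The main technical point to get right is the bookkeeping of which conjugacy classes of $G_0$ actually split as degree-one primes: one must check that the coset $\gamma \cdot G_{0,\text{geom}}$ is precisely the set of elements whose restriction to the constant field $l$ is $\frob_p$, and that this is independent of the choice of $\gamma$ (any two choices differ by an element of $G_{0,\text{geom}}$, which fixes $l$). I would also need to confirm that all the geometric invariants entering the error term — $|G_0|$, $m = |G_{0,\text{geom}}|$, $g_{L_0}$ — are $O_d(1)$ uniformly in $p$; this follows because $F_0(x,t) = f(x) + t$ has degree $d$ in $x$, so $[K:\fp(t)] = d$, $[L_0:\fp(t)] \le d!$, and the ramification of $L_0/\fp(t)$ occurs only over the (at most $d-1$) critical values of $f$ and over $\infty$, each with bounded ramification, so Riemann–Hurwitz bounds $g_{L_0}$ in terms of $d$ only. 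Everything else is a routine specialization of the Chebotarev machinery, so the genus/degree bound and the coset identification are the only places requiring care.
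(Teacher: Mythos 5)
Your proposal is correct and follows the paper's own proof exactly: the paper discards the $O_d(1)$ non-squarefree elements via \eqref{eq:square-free-short-interval} and then invokes the Chebotarev density theorem, and you have simply carried out that Chebotarev computation in full detail, including the coset identification and the $O_d(1)$ bounds on the genus and degree invariants. The extra bookkeeping you supply (that only classes restricting to $\frob_p$ on $l$ contribute, that a $G_0$-conjugacy class lies entirely in one coset of $G_{0,\mathrm{geom}}$, and the Riemann--Hurwitz control of $g_{L_0}$) is exactly what the paper leaves implicit.
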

\begin{proof}
The contribution from non-squarefree $g\in I(f)$ is $O_{d}(1)$ (cf.
(\ref{eq:square-free-short-interval}).)  The result now follows from
the Chebotarev density theorem.
\end{proof} {\em Remark:} If $f$ is Morse and $h \in \fp$, then
$G_{h,\text{geom}} = S_{d}$.  In the non-Morse case, the set of
possible constants $C(\phi,d)$ can be shown to only depend on $\phi$
and $d$ by noting that $C(\phi,d)$ is a subset of
\begin{equation}
  \label{eq:possible-constants}
\left\{ \frac{1}{|H|}\sum_{\sigma \in \gamma H} \phi(\sigma) : \gamma \in
S_{d}, \text{ $H< S_{d}$ acts transitively on \{1, \ldots, d\}}
\right\}
\end{equation}

As an immediate consequence of the Chebotarev density theorem we can
give a more precise description of what the constants might be for
several shifts, when independence is allowed to break down.
\begin{prop}
  \label{prop:uncorrelated-class-average}
  Let  $\phi_{1}, \ldots, \phi_k$ be class functions on $S_{d}$ and let  $h_{1},
  \ldots, h_{k} \in \fp$ be distinct shifts.
  Choosing $\gamma \in G^{k}$ such that $\gamma|_{l^{k}}$ acts
  via $\alpha \to \alpha^{p}$, we have
  $$
  \sum_{g \in I(f)}
  \left(  \prod_{i=1}^k \phi_i(g + {h_{i}}) \right)
  =
  c \cdot p + O_{d,k}(\sqrt{p})
  $$
  where
  $$
  c =
\frac{1}{|G^{k}_{\text{geom}}|} \cdot 
  \sum_{\sigma \in \gamma \cdot G^{k}_{\text{geom}}}
  \left(        \prod_{i=1}^k \phi_i(\sigma_{i})  \right),
  $$
and $(\sigma_1, \ldots, \sigma_k) \in S_{d}^{k}$ denotes the image
of $\sigma \in G^{k}$ under the natural inclusion $G^{k}
\xhookrightarrow {} S_{d}^{k}$.
\end{prop}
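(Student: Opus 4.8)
The plan is to deduce this directly from the Chebotarev density theorem applied to the compositum $L^{k}$; note that, in contrast to Proposition~\ref{prop:linear-disjoint-fields}, no hypothesis on the differences $h_{i}-h_{j}$ is needed, since we do not require the factors $L_{h_{i}}$ to be linearly disjoint.

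First I would parametrise $I(f)$ by writing $g = f+a$ with $a\in\fp$, so that $g+h_{i} = F_{h_{i}}(x,a)$ is the specialisation of $F_{h_{i}}(x,t)$ at $t=a$, and the relevant prime ideal of $\fp[t]$ is $\mathfrak{p}_{a}=(t-a)$. By \eqref{eq:square-free-short-interval} there are only $O_{d,k}(1)$ values of $a$ for which some $g+h_{i}$ fails to be squarefree, and only $O_{d,k}(1)$ values for which $\mathfrak{p}_{a}$ ramifies in $L^{k}$; since each $\phi_{i}$ is bounded in terms of $d$, discarding all such $a$ changes the sum by only $O_{d,k}(1)$. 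For the remaining $a$, the defining property of the Artin symbol recalled in Section~\ref{sec:remarks-galo-theory} identifies the cycle pattern of $g+h_{i}$ on the roots of $F_{h_{i}}$ with that of $\left(\frac{L_{h_{i}}/\fp(t)}{\mathfrak{p}_{a}}\right)$; composing with the compatibility of Artin symbols under the quotient maps $G^{k}\twoheadrightarrow G_{h_{i}}$, this cycle pattern is $\pi_{i}$ applied to the Artin symbol $\left(\frac{L^{k}/\fp(t)}{\mathfrak{p}_{a}}\right)\in G^{k}$. Hence $\phi_{i}(g+h_{i}) = \phi_{i}(\sigma_{i})$, where $(\sigma_{1},\ldots,\sigma_{k})\in S_{d}^{k}$ is the image of that Artin symbol under $G^{k}\hookrightarrow S_{d}^{k}$ (well defined at the level of conjugacy classes, which is all we need since each $\phi_{i}$ is a class function).

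Next I would group the sum over $a$ according to the $G^{k}$-conjugacy class of the Artin symbol. Since the $\mathfrak{p}_{a}$ are exactly the degree-one primes of $\fp[t]$, the number of unramified $a$ whose Artin symbol lies in a fixed conjugacy class $\mathcal{C}\subset G^{k}$ is $|C_{1}(L^{k}/\fp(t),\mathcal{C})|$, and
$$
\sum_{g\in I(f)}\prod_{i=1}^{k}\phi_{i}(g+h_{i})
= \sum_{\mathcal{C}} \Big(\prod_{i=1}^{k}\phi_{i}(\pi_{i}(\mathcal{C}))\Big)\,\big|C_{1}(L^{k}/\fp(t),\mathcal{C})\big| + O_{d,k}(1),
$$
the outer sum being over conjugacy classes of $G^{k}$, the product being well defined since each $\phi_{i}$ is a class function and $\pi_{i}$ carries conjugacy classes to conjugacy classes. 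Now apply Proposition~6.4.8 of \cite{FJ} with $K=\fp(t)$ (so $g_{K}=0$), $m=[L^{k}:\fp(t)\,l^{k}]=|G^{k}_{\text{geom}}|$, and $n=[l^{k}:\fp]$. For conjugacy classes $\mathcal{C}$ with $\text{res}_{l^{k}}\mathcal{C}$ not equal to the map $\alpha\mapsto\alpha^{p}$ the set $C_{1}$ is empty; the remaining classes are precisely those contained in $\gamma\,G^{k}_{\text{geom}}$, which is a union of $G^{k}$-conjugacy classes because $G^{k}_{\text{geom}}$ is normal in $G^{k}$ with cyclic (hence abelian) quotient $\gal(l^{k}/\fp)$, and for each such $\mathcal{C}$ one gets $|C_{1}(L^{k}/\fp(t),\mathcal{C})| = \frac{|\mathcal{C}|}{|G^{k}_{\text{geom}}|}\,p + O_{d,k}(\sqrt{p})$.

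Finally I would sum over the contributing classes — of which there are $O_{d,k}(1)$ — and use that $\prod_{i}\phi_{i}(\sigma_{i})$ is constant on $G^{k}$-conjugacy classes to rewrite $\sum_{\mathcal{C}\subset\gamma G^{k}_{\text{geom}}}|\mathcal{C}|\prod_{i}\phi_{i}(\pi_{i}(\mathcal{C}))$ as $\sum_{\sigma\in\gamma G^{k}_{\text{geom}}}\prod_{i}\phi_{i}(\sigma_{i})$; this yields the main term $c\cdot p$ with $c$ as in the statement and a total error $O_{d,k}(\sqrt{p})$. The only points that are not pure bookkeeping are the uniformity of the Chebotarev error term and the identification of the projected Artin symbol with the tuple of factorisation types. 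The latter is exactly compatibility of Artin symbols with quotients combined with the specialisation property recalled in Section~\ref{sec:remarks-galo-theory}; for the former one needs $|G^{k}|\le (d!)^{k}$ (immediate, hence the number of conjugacy classes and $|\mathcal{C}|$ are $O_{d,k}(1)$) together with $g_{L^{k}}=O_{d,k}(1)$, which I would get from Riemann–Hurwitz using that $L^{k}/\fp(t)$ has degree $\le (d!)^{k}$, ramifies over only $O_{d,k}(1)$ places (the critical values of the $f+h_{i}$ together with the point at infinity), and has tame ramification since $p>d$. I expect this genus bound to be the main — though still routine — obstacle.
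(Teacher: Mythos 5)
Your argument is correct and follows exactly the approach the paper intends: the paper simply asserts that this proposition is an immediate consequence of the Chebotarev density theorem, without supplying further detail, and your proof is a careful unpacking of that assertion. The points you spell out --- compatibility of Artin symbols under the restrictions $G^{k}\twoheadrightarrow G_{h_{i}}$, identification of the contributing conjugacy classes as those contained in the Frobenius coset $\gamma\,G^{k}_{\text{geom}}$ (using that $G^{k}_{\text{geom}}$ is normal with cyclic quotient), and the genus bound for $L^{k}$ via Riemann--Hurwitz together with tameness for $p>d$, which gives the uniform $O_{d,k}(\sqrt{p})$ error --- are precisely the routine steps the paper leaves to the reader.
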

\begin{rem}
  In order to go beyond factorization patterns (to distinguish
  conjugacy classes having the same factorization pattern), note that
  the proof gives a slightly more general version of
  Proposition~\ref{prop:uncorrelated-class-average}, where
  $\phi_{1}, \ldots, \phi_k$ are class functions on
  $G_{h_{1}}, \ldots, G_{h_{k}}$, and using the inclusion $G^{k}
  \subset G_{h_{1}} \times G_{h_{2}} \times \cdots \times G_{h_{k}}$
  to map $\sigma \in G^{k}$ to $(\sigma_{1}, \ldots, \sigma_k)
  \in G_{h_{1}} \times G_{h_{2}} \times \cdots \times G_{h_{k}}$.
\end{rem}

\section{Proofs of Theorems~\ref{thm:class-function-correlations} and \ref{thm:non-generic-correlations}}
\label{sec:proof-theorem-corr}
We begin with proving Theorem
\ref{thm:class-function-correlations}. The first part of the Theorem
is an immediate corollary of Propositions
\ref{prop:Morse-Sn}  and
\ref{prop:all-possible-densities}. Indeed, for $f\in M_d(\fp)$ Morse,
$G_{0}=G_{0,\text{geom}}=S_{d}$, and the
sums in Theorem \ref{thm:class-function-correlations} and in
Proposition \ref{prop:all-possible-densities} are the same.
As for the second part, for $p$ sufficiently large,
Proposition~\ref{prop:linear-disjoint-morse} gives that $G^{k} = 
G^{k}_{\text{geom}}= 
S_{d}^{k}$. 
By Proposition
\ref{prop:uncorrelated-class-average},
$$
\sum_{ g \in I(f)}
\left(
\prod_{i=1}^{k}  \phi_i(g+h_{i})  
\right)= c \cdot p + O_{d,k}(\sqrt{p})
$$
where
$$
c =
 \frac{1}{|S_{d}^{k}|} \cdot 
 \sum_{\sigma \in  S_{d}^{k}}
 \left(        \prod_{i=1}^k \phi_i(\sigma_{i})  \right)
 =
 \prod_{i=1}^{k} c(\phi_i),
 $$
and $c(\phi_i) =\frac{1}{|S_{d}|} 
 \sum_{\sigma \in  S_{d}} \phi_i(\sigma)$ for $i=1,\ldots k$.

The proof of Theorem~\ref{thm:non-generic-correlations} is similar. 
The first part follows from
Proposition~\ref{prop:all-possible-densities}; 
letting
$c_i =
\frac{1}{|G_{0,\text{geom}}|} \sum_{ \sigma \in \tilde{\gamma} \cdot
  G_{0,\text{geom}} 
}\phi_i(\sigma)
$
it is clear that $C(\phi_i,d)$, the set
of possible values of $c_i$, is clearly a subset of the finite set
given in (\ref{eq:possible-constants}).
As for the second part, we note that, by part (2) of Proposition
\ref{prop:linear-disjoint-fields}, $G_{\text{geom}}^{k} = (G_{0,\text{geom}})^{k}$
and it
follows that the constant in front of $p$ is 
\begin{multline*}
c =
\frac{1}{|G^{k}_{\text{geom}}|} \cdot 
\sum_{\sigma \in \gamma \cdot G^{k}_{\text{geom}}}
\left(        \prod_{i=1}^k \phi_i(\sigma_{i})  \right)
=\\=
\frac{1}{|G_{0,\text{geom}}|^k}\prod_{i=1}^{k}
\left(\sum_{ \sigma \in \tilde{\gamma} \cdot G_{0,\text{geom}}
  }\phi_i(\sigma)\right)
=\prod_{i=1}^{k}c_i,
\end{multline*}
where $\gamma \in G^{k}$ is some element such that $\gamma|_{l^{k}} =
\frob_{l^{k}}$, and $\tilde{\gamma}$ denotes the image of $\gamma$ under the
projection from $G^{k}$ to $G_{0}$.

\section{Proof of Theorem~\ref{thm:prime-independence}}
\label{sec:proof}

Recall that $f\in M_{d}(\fp)$ denotes a Morse polynomial.
We begin by showing that the characteristic function on 
prime 
polynomials is a class function.  With
	\[
	1_{d\text{-cycle}}(\sigma)=\begin{cases}
	1 & \sigma=(i_1\cdots i_d) \text{ is a $d$-cycle}\\
	0 & \text{otherwise}
	\end{cases}
      \]
      the function 
	\[
	\phi(f)=\begin{cases}
	{1}_{d\text{-cycles}}(\sigma_f) & f \text{ is squarefree}\\
	0 & \text{otherwise}
	\end{cases}
      \]
on $M_d(\fp)$       
	is a class function, which equals
        ${1}_{\text{Prime}}$ since a polynomial is irreducible if and
        only if $\sigma_f$ is a $d$-cycle. 
%
%
        Hence
        $c(1_{\text{Prime}})$ is the density of $d$-cycles in $S_{d}$,
        namely $\frac{1}{d}$, and Theorem \ref{thm:prime-independence} now
        follows from Theorem \ref{thm:class-function-correlations}.

        In a similar way, the ``Titchmarsh divisor problem'', and the ``shifted divisor problem''   for very short intervals $I(f)$
        may be treated.  The former consider sums of the form
\[
\sum_{g\in I(f)}{1}_{\text{Prime}}d_r(f+h)
\]
where $d_r(f)$ is the number of
ways to decompose $f$ as a product of $r$ monic polynomials, 
and the latter concerns sums of the form
\[
\sum_{g\in I(f)}d_{r_1}(f+h_1)\cdots d_{r_k}(f+h_k)
\]
where $r_1,\dots,r_k$ are positive integers, and $h_1,\dots h_k\in\fp$
are distinct. Once again, $d_r(f)$ is a class function, since for 
$f$ squarefree, $d_r(f)=d_r(\sigma_{f})$, where
$d_r(\sigma)$ is the number of ways to decompose the permutation
$\sigma$ as a product of $r$ disjoint cycles (here we allow for empty
cycles.) We can therefore apply 
Theorem \ref{thm:class-function-correlations} for these sums and get
that for distinct $h_1,\dots,h_k \in\fp$,
\begin{equation}
\sum_{g\in I(f)}d_{r_1}(f+h_1)\cdots
d_{r_k}(f+h_k)=\prod_{i=1}^k\binom{n+r_i-1}{r_i-1}p+O(p^{1/2}) 
\end{equation}
(the constants are derived in \cite[Lemma~2.2]{ABR}), and
for all nonzero $h\in\fp$
\begin{equation}
\sum_{g\in
  I(f)}{1}_{\text{Prime}} \cdot d_r(f+h)
=\frac{1}{n}\binom{n+r-1}{r-1}p+O(p^{1/2}).
\end{equation}
\section{Möbius and Chowla type sums in very short intervals}
\label{sec:mobius-chowla-type}
In this section we give proofs of
Theorems~\ref{thm:moebius-chowla-cancellation} and
\ref{thm:moebius-chowla-no-cancellation}.
We begin with a discussion of the Möbius $\mu$ function for function
fields.

Given a polynomial $f \in M_{d}(\fp)$, let $\omega(f)$ denote the
number of distinct irreducible divisors of $f$.
It is natural to define the function field Möbius $\mu$-function by
$\mu(f) := (-1)^{\omega(f)}$ for $f$ squarefree; otherwise we set
$\mu(f)=0$.  For $p$ large and $f \in M_{d}(\fp)$, essentially all
$g \in I(f)$ are squarefree
(cf. (\ref{eq:square-free-short-interval})), and hence $\mu$ is a
class function in the sense previously discussed.


Given
$\sigma \in S_{d}$, let $c(\sigma)$ denote the number of cycles in the
cycle representation of $\sigma$, {\em including all $1$-cycles}, e.g., for
$(12) \in S_{4}$ we write $(12) = (12)(3)(4)$ and find that
$c(\sigma) = 3$.  Thus $\mu(f)$, for $f$ squarefree, is
given by $(-1)^{c(\sigma_{f})}$.  It is convenient to abuse notation
and define $\mu(\sigma) := (-1)^{c(\sigma)}$ for $\sigma \in S_{d}$.
It turns out that $(-1)^{c(\sigma)}$ is closely related to
$\operatorname{sgn}(\sigma)$, the  sign of $\sigma$ regarded as a
permutation:
$$
\mu(\sigma) = 
(-1)^{c(\sigma)} = (-1)^{d} \cdot \operatorname{sgn}(\sigma)
$$ 
To see this consider the disjoint cycle
decomposition $\sigma = \prod_{i=1}^{L} c_{i}$ (including one-cycles).
We then have
$\mu(\sigma) = (-1)^{L}$.
Now, with $L_{1}$ denoting the number of even length cycles, and
$L_{2}$ the number of odd length cycles, we trivially have
$L = L_{1} + L_{2}$ and moreover that
$\operatorname{sgn}(\sigma) = (-1)^{L_{1}}$.
As the sum of the length of all cycles $c_{1}, \ldots, c_{L}$ equals
$d$ (here it is crucial to include one-cycles), we find that $L_{2}$
and $d$ has the same parity.
Hence
$$
\operatorname{sgn}(\sigma) = (-1)^{L_{1}} =
(-1)^{L_{1}+L_{2}+d} =
(-1)^{L+d} =
\mu(\sigma) (-1)^{d} 
$$
and we find that $\mu(\sigma) = \pm \operatorname{sgn(\sigma)}$, where
the sign is given by the parity of $d$.

%
%
Now, if $f$ is Morse, we have $G_{0}= G_{\text{geom}} = S_{d}$, hence
(cf. Theorem~\ref{thm:non-generic-correlations}
$$
c(\mu) = \frac{1}{|G|}\sum_{\sigma \in G_{0}} \mu(\sigma) =
\frac{(-1)^{d}}{|S_{d}|}\sum_{\sigma \in S_{d}}
\operatorname{sgn}(\sigma) =  0.
$$
Thus Theorem~\ref{thm:moebius-chowla-cancellation} immediately 
follows from Theorem~\ref{thm:class-function-correlations}.

\subsection{The non-Morse case}
\label{sec:non-morse-case}

We begin by characterizing short intervals on which there is no
cancellation in the sum $\sum_{g \in I(f)} \mu(g)$.
Fix $\gamma \in G_{0}$ such that $\gamma|_{l}$ acts as
$\alpha \to \alpha^{p}$.

{\em First case:} We begin by considering the case
$G_{\text{0,geom}} \subset A_{d}$ (with $A_{d} \subset S_{d}$ denoting
the alternating group.)  
Since $\mu$ is a class
function, (\ref{eq:constant-via-coset-sum}) gives that 
$$
c(\mu) = \frac{1}{|G_{\text{0,geom}}|}
\sum_{\sigma \in \gamma \cdot G_{\text{0,geom}}} \mu(\sigma)
$$
%
and since $\operatorname{sgn}$ is trivial on
$G_{\text{0,geom}} \subset A_{d}$, we find that $\mu(g)$ has {\em constant sign} for
$g \in I(f)$, with the possible exception of $O(1)$ non-squarefree
$g$.  Hence $|\sum_{g \in I(f) } \mu(g)| = p + O_{d}(\sqrt{p})$.

{\em Second case:} If $G_{\text{0,geom}}$ is not contained in $A_{d}$,
there exist at least one odd permutation $\tau \in G_{\text{0,geom}}$; in
particular, $\sum_{\sigma \in G_{\text{0,geom}}}
\operatorname{sgn}(\sigma) = 0$.  Thus, 
$$
c(\mu) = \frac{1}{|G_{\text{0,geom}}|}
\sum_{\sigma \in \gamma G_{\text{0,geom}}} \mu(\sigma) =
 \frac{(-1)^{d} \operatorname{sgn}(\gamma)}{|G_{\text{0,geom}}|}
\sum_{\sigma \in G_{\text{0,geom}}} \operatorname{sgn}(\sigma) = 0,
$$
and hence Theorem~\ref{thm:class-function-correlations} gives that
$$
\sum_{g \in I(f) } \mu(g) = O(\sqrt{p})
$$
In summary, there is (square root) cancellation in $\sum_{g \in I(f) }
\mu(g)$ if and only if there is sign cancellation in $\sum_{\sigma \in
  G_{\text{0,geom}}} \operatorname{sgn}(\sigma)$.

\subsubsection{Cancellation in Chowla sums}
\label{sec:canc-chowla-sums}
We first note that if $h_{1}, \ldots, h_{k} \in \fp$ are distinct
elements such that $h_{i}-h_{j} \not \in B(f)$ (``the uncorrelated
case''), Theorem~\ref{thm:moebius-chowla-no-cancellation} follows
immediately from Theorem~\ref{thm:non-generic-correlations}.

If $h_{i}-h_{j} \in B(f)$ (``the correlated case'') we argue as
follows.
%
%
As we have seen, no cancellation in the Möbius sum
$\sum_{g \in I(f) } \mu(g+h_{1})$ (which in turn happens if and only
if there is no cancellation in the unshifted sum
$\sum_{g \in I(f) } \mu(g)$, or in any other shifted sum
$\sum_{g \in I(f) } \mu(g+h_{i})$, $i=2,\ldots,k$) is equivalent to
$G_{h_{i},\text{geom}} \subset A_{d}$ for all $i$.  In particular,
$G^{k}_{\text{geom}} \subset \prod_{i=1}^k G_{h_{i},\text{geom}}
\subset A_{d}^{k}$.
Since
$
\sum_{g \in I(f)} \prod_{i=1}^{k} \mu(g + h_{i}) = c \cdot p +
O_{d,k}(\sqrt{p}) 
$
where 
$$
  c =
\frac{1}{|G^{k}_{\text{geom}}|} \cdot 
  \sum_{(\sigma_{1}, \ldots,\sigma_k) \in \gamma \cdot G^{k}_{\text{geom}}}
  \left(        \prod_{i=1}^k \mu(\sigma_{i})  \right),
$$
(cf. Proposition~\ref{prop:uncorrelated-class-average} and use the
natural embedding $G^{k} \xhookrightarrow {} S_{d}^{k})$
we find that
there is no cancellation in the Chowla
sum.


On the other hand, if there is cancellation in the short Möbius sum,
there must be some odd permutation in $G_{h_{1},\text{geom}}$, and
this in fact implies that the same holds for $G^{k}_{\text{geom}}$,
provided $p$ is sufficiently large (in terms of $k$.)  To see this,
define $R_{\text{odd}} \subset R_{f}$ as the set of critical values of
$f$ giving rise to odd permutations in $G_{h_{1},\text{geom}}$. Then,
as $G_{h_{1},\text{geom}}$ is generated by the inertia subgroups of
points outside $\infty$, and their conjugates (cf.
\cite[Proposition~4.4.6]{serre-topics-in-galois-theory-book}),
$R_{\text{odd}}$ is nonempty.  By \cite[Lemma~16]{GK13} (in
particular, take $H= \{-h_{1}, -h_{2}, \ldots, -h_{k}\}$,
$S=R_{\text{odd}}$ and note that we may assume that
$p > 4^{k+d} \geq 4^{k+|R_{\text{odd}}|}$ since the implied constants
in the error terms are allowed to depend on $d$ and $k$) there must be
some element in the multi-set generated by
$R_{\text{odd}}+h_{1}, \ldots, R_{\text{odd}}+h_{k}$ that has {\em
  odd} parity, and hence $G^{k}_{\text{geom}}$ contains an odd element
given by a product of an odd number of odd permutations.  Thus the
elements of $G^{k}_{\text{geom}}$ do not have constant sign, and hence
there is square root cancellation in the Chowla sum also in this case.

To see that $G_{\text{0,geom}} \subset A_{d}$ does indeed occur (for $p$
large and $d$ fixed; for interesting examples when  $p|d$, see
\cite{carmon-rudnick-moebius-chowla}), we can take $f(x)=x^{l}$ and
$p \equiv 1 \mod l$ for some odd prime $l$; then $G=G_{\text{geom}} $
is cyclic of order $l$, and all nontrivial elements are given by
{\em even} $l$-cycles.

\section{Examples of degenerate intervals --- further details}
\label{sec:exampl-degen-interv}

\subsection{Prime density fluctuations}
\label{sec:prime-dens-fluct-proofs}

We take  $f(x) = x^{3}$, $\phi_1=\phi_2 =  1_{\text{Prime}}$.  As
$I(f) = \{ x^{3} - t, t \in \fp \}$ it is enough to consider splitting
patterns of $x^{3}-t=0$.
For primes $p \equiv 1 \mod 3$, $x^3-t$ has
either zero or three roots in $\fp$; the latter happens if and only if
$t$ is a cube of some element in $\fp$, and there are $1+ (p-1)/3$
such elements.  Hence $c(1_{\text{Prime}},p) = 2/3$ for $p \equiv 1
\mod 3$.
%
%
On the other hand, for $p \equiv 2 \mod 3$, the map $x \to x^{3}$ is a
permutation of the elements in $\fp$, hence $x^{3}-t=0$ has one root
in $\fp$ no matter what $t$ is.  In particular, $c(1_{\text{Prime}},p)
= 0$ for $p \equiv 2 \mod 3$.

Since $x^{3}$ only has one critical value, $|R_{f}|=1$ and hence
$(h_{1}+R_{f}) \cap (h_{2} + R_{f}) = \emptyset$ unless $h_{1}=h_{2}$; in
particular $B(f) = (R_{f} - R_{f}) \setminus \{0\} = \emptyset$, and
(\ref{eq:kummer-uncorrelated}) follows from
Theorem~\ref{thm:non-generic-correlations}.

\subsection{Breakdown of independence of primes}
\label{sec:breakd-indep-prim-proofs}

Take $f(x) = x^{4} - 2x^{2}$, and let $p$ be a large prime.
The following was shown in \cite[Section 4.2]{GK13}: $G \simeq D_{4}$,
and for $h_{1}=0, h_{2}=1$, we have $G^{2} = \gal(L^{2}/\fp(T))$
(where $L^2$ denotes the compositum $L_{h_1} L_{h_2}$), and $G^{2}$
genuinely depends 
on $p$.  Namely, for $p \equiv 3 \mod 4$ we have
$G^{2} \simeq D_{4} \times D_{4}$, whereas for $p \equiv 1 \mod 4$,
$G^{2}=G^{2}_{\text{geom}} = H$ is a certain index two subgroup of
$D_{4} \times D_{4}$.  More precisely,
$$
\ifsage
H = \sage{G}
\else
H = \langle (6,7), (2,3)(5,6)(7,8), (1,2)(3,4) \rangle
\fi
\subset D_{4} \times D_{4},
$$
where we have identified the first copy of $D_{4}$ as permutation of
$\{1,2,3,4\}$, and the second copy as a permutation of
$\{ 5, 6 ,7, 8\}$.
As 
\begin{multline*}
D_{4} = \{ 
(1,4)(2,3), (1,3)(2,4), (1,3), (2,4),
(1,2)(3,4), 
{(1,2,3,4)},
{(1,4,3,2)}
 \}
\end{multline*}
(note that $D_{4}$ contains exactly two $4$-cycles) the Chebotarev
density theorem gives that
$$
c(1_{\text{Prime}}) = 2/|D_{4}| = 2/8 = 1/4
$$

A tedious but straightforward calculation gives that $|H|= 32$ and
that
there are exactly four elements in $H$  corresponding to
both
$f(x)+t$ and $f(x)+1+t$ being prime for $t \in \fp$, 
namely
${ (1,3,4,2)(5,7,8,6)}$, 
${ (1,3,4,2)(5,6,8,7)}$,
${ (1,2,4,3)(5,6,8,7)}$, and 
${ (1,2,4,3)(5,7,8,6)}$.
Hence the ``twin prime density'' for the shift $h=1$ equals
$4/|H| = 4/32 = 1/8 \neq 1/4^{2}$.
Similarly, the density for the shift $h = -1$ also equals $1/8$.

As mentioned above, for $p \equiv 3 \mod 4$, the compositum $L^{2}$ of
$L_{0}$ and $L_{1}$ was shown to have maximal Galois group, namely
$G^{2} \simeq D_{4} \times D_{4}$; further the field of constants of
the compositum was shown to equal $\fp[i]$ (where $i^{2}=1$).  Thus,
if $\sigma \in G$ is any element such that $\sigma(i)=-i$, we find
that Frobenius takes values in the coset
$\sigma H \subset D_{4} \times D_{4}$.  In particular, as all elements
of $G$ consisting of two $4$-cycles in $D_{4} \times D_{4}$ are
contained in $H$, there are no such elements in the coset $\sigma H$.
Consequently the Chebotarev density for $(g, g+1)$ both being prime is
{\em zero} for $p \equiv 3 \mod 4$ and $g \in I(f)$ (even though
$c(1_{\text{Prime}})=1/4$.)

On the other hand, the critical points of $f$ (i.e., zeros of $f'$)
are $\{0, -1, 1 \}$, and thus the critical values of $f$ are given by
$R_{f} = \{ 0, -1 \}$.  Hence 
$B(f) = (R_{f}-R_{f}) \setminus \{0 \} = \{-1, 1\}$, and 
thus Theorem~\ref{thm:non-generic-correlations} gives
independence in the sense that the simultaneous prime density for
$g, g+h$ equals $1/4^{2}$ for $h \neq 0, \pm 1$ and $g \in I(f)$.

The ``coincidence'' of getting the expected twin prime density
when averaging over all primes $p$ can be explained as follows.  We
lift the setup to $\Q$ and consider
$G = \gal(f(x)+t, f(x)+1 +t / \Q(t))$.  Then
$G \simeq D_{4} \times D_{4}$, and the constant field extension is
$\Q(i)$.  Thus, if we first average over primes $p$, and then over
$t \in \fp$, the Frobenius element equidistributes in all of $G$ (for
$p \equiv 1 \mod 4$ it equidistributes in $H$, and for
$p \equiv 3 \mod 4$ it equidistributes in the nontrivial coset of $H$,
and $G$ is the union of these two cosets.) In
particular, as there are $4$ elements in $G$ whose cycle structure
corresponds two simultaneous prime specialization, we find that the
$p$-averaged twin prime density equals $4/|G|=4/64=1/4^{2}$, ``as
expected''.


\section{The large $q$ limit}
\label{sec:large-q}

The previous results can be extended to the setting of very short
intervals in $M_{d}(\fq)$ for $q=p^{l}$ as long as $p$ grows
(the key point is that the proof of Lemma~16 in \cite{GK13} also works
for $\fq$ provided $p$ is sufficiently large).

The setting of $p$ fixed and letting $l$ grow is more complicated.  We
first note there is an obvious obstruction to $f(x) + sx$ being Morse
for {\em any} value of $s \in \fq$ in case $p | \deg(f)$ --- clearly
$\deg(f') < d-1$ and hence there are at most $d-2$ critical values.
However, even if we assume $(\deg(f),p) = 1$ there are other
obstructions for the the Galois group being maximal (i.e., that
$G^{k}_{\text{geom}} = S_{d}^{k}$), {\em even though}
$G_{h_{i}} = S_{d}$ for $1 \le i \le k$.
For example, consider the family $f_{s}(x) = x^{3} + sx$ for
$s \in \fq$ where $q = p^{l}$ and $p>3$ is fixed. For all but $O(1)$
choices of $s$, $f_{s}(x)$ is Morse, and $f_{s}'(x)= 3x^{2}+s$ is a
quadratic with two distinct roots in $\fqtwo$, and it is easy to see
that $R_{f_{s}} = \{\alpha_{s}, -\alpha_{s}\}$ for some
$\alpha_{s} \in \fqtwo$.  Taking $k=p$ and letting
$h_{i} = i \alpha_{s}$ for $1 \le i \le k$, we find that the
multiset-union of $R+h_{1}, R+h_{2}, \ldots, R+h_{k}$, as a set
is a linear
$\fp$-subspace in $\fqtwo$, with each element having multiplicity two
(since $|R|=2$).  Consequently $G^{k}_{\text{geom}}$ contains only
{\em even} permutations.  In particular, the equivalence
between cancellation in Möbius sums and Chowla sums
(cf. Theorem~\ref{thm:moebius-chowla-no-cancellation}) does not hold
in the large $q$ limit.
%

A more subtle example of independence breaking down can also be given.
For $f(x) = x^4 + x^3 + 3x^2 \in M_{4}({\mathbb F}_{7})$, the critical
values are given by $R = \{0,1,3\}$; taking
$(h_{1},\ldots, h_{4}) = (0,1,2,4)$ we find that the multiset union of
$h_{i}+R$ has multiplicity two on its support.  This type of example
cannot occur for $p$ large, but if we fix $p$ and consider polynomials
of the form $f_{s}(x) = f(x) + sx$, $s \in \mathbb{F}_{7^{l}}$ for
growing $l$, it is clear that the above phenomena occur at least once
(for $s=0$.)  However, if we {\em fix} $h_{1},\ldots, h_{k}$, in some
extension of $\fp$, this can only happen for $O_{d,k}(1)$ values
$s \in \overline{\fq}$ (but note that this set of ``exceptional''
$s$-values depends on the shifts $h_{1},\ldots, h_{k}$.)

\begin{thm}
  \label{thm:large-q-disjoint}
  Fix distinct elements $h_{1}, \ldots, h_{k} \in \overline{\fp}$, and
  let $f_{0} \in M_{d}(\overline{\fp})$ with $(p,d(d-1))=1$, and let
  $q= p^{l}$ for some $l\geq 1$ large enough so that
  $f_{0} \in \fq[x]$ and $h_{1}, \ldots, h_{k} \in \fq$.  Given
  $s \in \fq$, let $f_{s}(x) = f_{0}(x) + sx$ and for $i=1,\ldots,k$,
  let $K_{i}/\fq(t)$ denote the field extension generated by
  $f_{s}(x)+h_{i}+t$, let $L_{i}$ denote the Galois closure of
  $K_{i}$, and let $L^{k}$ denote the compositum of
  $L_{1},\ldots,L_{k}$.  Then, for all but $O_{d,k}(1)$ values of
  $s \in{\fq}$, $f_{s}$ is Morse, and we have
  $\gal(L^{k}/\fq(t)) \simeq S_{d}^{k}$.
\end{thm}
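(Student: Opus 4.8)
The plan is to reduce the statement to a statement about a single algebraic family parametrized by $s$, and then invoke the Hilbert irreducibility / specialization principle over $\overline{\fp}(s)$ together with the known maximality results for Morse polynomials over fields of characteristic coprime to $2d$ (Proposition~\ref{prop:Morse-Sn}), combined with the disjointness machinery of Proposition~\ref{prop:linear-disjoint-fields} and Proposition~\ref{prop:linear-disjoint-morse}. More precisely, first I would work over the function field $\mathbb{K} := \overline{\fp}(s)$, treating $s$ as a transcendental, and consider the generic member $f_{s}(x) = f_{0}(x) + sx \in \mathbb{K}[x]$. One checks that $f_{s}$ is a Morse polynomial over $\mathbb{K}$: since $(p,d(d-1))=1$ we have $\deg f_{s}' = d-1$, and the critical values of $f_{s}$ are distinct for all but finitely many $s$ by Proposition~\ref{prop:Morse-dense} (applied over $\overline{\fp}$, valid since $(p,2d)=1$), hence in particular for the generic $s$. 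Then Proposition~\ref{prop:linear-disjoint-morse}, or rather its proof via \cite{GK13}, shows that over $\mathbb{K}(t)$ the compositum $L^{k}_{\mathbb{K}}$ of the Galois closures of $f_{s}(x)+h_{i}+t$ has Galois group $S_{d}^{k}$: the argument there (linear disjointness when the translated critical-value sets are pairwise disjoint, plus the Geyer/inertia-generation input) goes through verbatim over any field of characteristic coprime to $2d$ in which the $h_{i}$ are distinct, because the key geometric input — that $G_{h_{i},\mathrm{geom}} = S_{d}$ for Morse $f_{s}$ and that distinct shifts yield disjoint ramification away from a controlled set — is characteristic-free up to the $(p,2d)=1$ hypothesis.

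Next I would pass from the generic fiber to special fibers by a specialization argument. The group $\gal(L^{k}_{\mathbb{K}}/\mathbb{K}(t))$ is $S_{d}^{k}$, and for all but finitely many specializations $s \mapsto s_{0} \in \overline{\fp}$ the specialized extension retains full Galois group $S_{d}^{k}$ — this is the standard fact that the Galois group can only drop on a proper closed subset of the parameter space (a Hilbert-irreducibility / good-reduction statement for the branched cover defined by the compositum; see e.g.\ the specialization results underlying \cite{serre-topics-in-galois-theory-book}, or one can argue directly that the discriminant locus and the locus where the $k$ translated critical-value sets fail to be pairwise disjoint — i.e.\ where some $h_{i}-h_{j} \in B(f_{s})$ — together with the ramification locus, form an $O_{d,k}(1)$-element set). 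Over that good locus $f_{s}$ is Morse and $h_{i}-h_{j} \notin B(f_{s})$ for $i \neq j$, so Proposition~\ref{prop:linear-disjoint-fields}(2) applies and combines with $G_{h_{i}} = S_{d}$ (Proposition~\ref{prop:Morse-Sn}) to force $\gal(L^{k}/\overline{\fp}(t)) = S_{d}^{k}$, hence a fortiori $\gal(L^{k}/\fq(t)) = S_{d}^{k}$ once $s_{0},h_{i} \in \fq$. The bound $O_{d,k}(1)$ on the exceptional set of $s$ follows because each obstruction locus is cut out by a polynomial in $s$ of degree bounded in terms of $d$ and $k$ only (the critical-value discriminant has degree $O_{d}(1)$; each condition $h_{i}-h_{j} \in B(f_{s})$ is one polynomial equation in $s$ of degree $O_{d}(1)$, and there are $\binom{k}{2}$ of them; the locus where the cover acquires bad reduction or the constant field grows is likewise cut out by bounded-degree data).

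The main obstacle I expect is the uniformity of the "good reduction" step: one needs that the \emph{same} finite exceptional set of $s$-values works simultaneously for the maximality of $G^{k}$, and that its cardinality is bounded purely in terms of $d$ and $k$ and not in terms of $p$ or $q$ — in particular the relevant discriminants and resultants must not vanish identically in characteristic $p$, which is exactly where the hypothesis $(p,d(d-1))=1$ is used (it guarantees $\deg f_{s}' = d-1$ and that the critical-value discriminant is not identically zero mod $p$). A secondary technical point is checking that the linear-disjointness argument of \cite{K09,GK13} — which was written for $f$ over a prime field $\fp$ with $p$ large — is genuinely insensitive to replacing $\fp$ by $\overline{\fp}$ and to the shifts $h_{i}$ lying in an extension; this is routine since those arguments are geometric (they take place after base change to $\overline{\fp}(t)$ anyway) and only use $\mathrm{char} \neq 2,\ (\deg f, \mathrm{char})=1$, but it should be stated carefully. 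Once these uniformity issues are handled, the conclusion that $f_{s}$ is Morse and $\gal(L^{k}/\fq(t)) \simeq S_{d}^{k}$ for all but $O_{d,k}(1)$ values of $s \in \fq$ is immediate.
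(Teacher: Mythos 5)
Your high-level plan --- base-change to $\overline{\fp}(s)$ with $s$ transcendental and then specialize --- is the right framework and matches the paper's strategy, but the proposal omits the argument that is the actual crux of the theorem. Everything hinges on showing that the locus $\{s : h_i-h_j\in B(f_s)\text{ for some }i\neq j\}$ is finite, i.e.\ that the polynomial
\[
\Pi(s)=\prod_{h_i\neq h_j}\ \prod_{\tau\neq\tau'}\bigl(f_s(\tau)-f_s(\tau')-(h_i-h_j)\bigr)
\]
(with $\tau,\tau'$ running over roots of $f_s'$) is \emph{not identically zero} in $s$. You flag this at the end as a uniformity issue --- ``the relevant discriminants and resultants must not vanish identically in characteristic $p$'' --- and attribute it to $(p,d(d-1))=1$ ensuring the Morse discriminant is nonzero, but that only controls the critical values being \emph{distinct}, not their \emph{differences} avoiding the fixed constants $h_i-h_j$. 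The non-vanishing of $\Pi$ needs that for $s$ transcendental the differences $f_s(\tau)-f_s(\tau')$ of distinct critical values never lie in $\overline{\fp}$, which in fixed characteristic is a genuinely nontrivial fact (identities of this type can persist for all $s$). The paper's Proposition~\ref{prop:fs-is-lindisjoint} is devoted to exactly this: supposing $c:=f_s(\tau_1)-f_s(\tau_2)\in\overline{\fp}$, coprimality of the degrees $d$ and $d-1$ gives $\mathbb{E}(\tau_1)=\mathbb{E}(s,f_s(\tau_1))=\mathbb{E}(s,f_s(\tau_2))=\mathbb{E}(\tau_2)$, so $\tau_2$ is a M\"obius transformation of $\tau_1$ over $\mathbb{E}$; a degree count rules out the non-affine case, yielding $\tau_2=A\tau_1+B$, and the auxiliary polynomial $h(x)=f(x)-xf'(x)$ then satisfies $h(x)-c=h(Ax+B)$, forcing the multiset $R_h$ of critical values of $h$ to satisfy $R_h=R_h-c$; this is an $\fp$-action on a multiset of cardinality $\deg h'=d-1$, contradicting $p\nmid d(d-1)$. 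None of that is present in your proposal, and without it the claim that the exceptional set has $O_{d,k}(1)$ elements is unsupported --- the paper's own example $f_s(x)=x^3+sx$ with $s$-dependent shifts $h_i=i\alpha_s$ shows how close the degenerate behaviour lurks.

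A secondary problem: you propose to get $\gal(L^k/\overline{\fp}(s)(t))\simeq S_d^k$ by invoking Proposition~\ref{prop:linear-disjoint-morse} ``or rather its proof via \cite{GK13}.'' But that proposition carries the hypothesis $p>4^{k+d-1}+1$, which is exactly what is unavailable in the fixed-$p$, large-$q$ regime, and the paper notes explicitly that Lemma~16 of \cite{GK13} only transfers to $\fq$ when $p$ is sufficiently large. The paper's actual proof never uses Proposition~\ref{prop:linear-disjoint-morse} here: once Corollary~\ref{cor:no-ramification-large-q} (built on Proposition~\ref{prop:fs-is-lindisjoint}) gives $h_i-h_j\notin B(f_s)$ for all but $O_{d,k}(1)$ values of $s$, linear disjointness via Proposition~\ref{prop:linear-disjoint-fields}(2) together with $G_{h_i}=G_{h_i,\mathrm{geom}}=S_d$ (Morseness, Proposition~\ref{prop:Morse-Sn}) already gives $\gal(L^k/\fq(t))\simeq S_d^k$. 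So the parenthetical ``argue directly'' alternative you sketch is the correct route --- but you must actually prove the non-vanishing of $\Pi$ rather than treat it as routine bookkeeping.
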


Before giving the proof of Theorem~\ref{thm:large-q-disjoint} we
deduce an immediate corollary, namely  a somewhat weaker ``large $q$''
analogue of 
Theorems~\ref{thm:prime-independence},
\ref{thm:moebius-chowla-cancellation} and
\ref{thm:class-function-correlations}. 
To do so we need some additional notation: given $f\in M_{d}(\fq)$, let
$I_{\fq}(f) := \{ f(x) + a : a \in \fq \}$, and as usual, for $s \in
{\fq}$ let $f_{s}(x) = f(x) + sx$.
\begin{cor}
Fix distinct
  elements $h_{1}, \ldots, h_{k} \in \overline{\fp}$, and let $f_{0}
  \in M_{d}(\overline{\fp})$ with $(p,d(d-1))=1$.
%
  There exists a subset $S_{\text{bad}} \subset \overline{\fp}$,
  depending on $f_{0}$ and $h_{1}, \ldots, h_{k}$, with the following properties:
  
  \begin{enumerate}
  	\item $|S_{\text{bad}}| = O_{d,k}(1)$.
  	\item Let $q=p^l$ be any prime power such that
          $h_1,\dots,h_k\in\fq$ and $f_0\in\fq[x]$.  Then, for
  $s \in \fq \setminus S_{\text{bad}}$, 
  Theorems~\ref{thm:prime-independence},
  \ref{thm:moebius-chowla-cancellation}, and
  \ref{thm:class-function-correlations}
hold for the very short interval $I_{\fq}(f_{s})$.
  For example, if $s \in \fq \setminus S_{\text{bad}}$, then
$$
|\{ g \in I_{\fq}(f_{s}) : \text{ $g+h_{1}, \ldots, g+h_{k}$ are irreducible }\}|
 = \frac{q}{d^{k}} + O_{d,k}(\sqrt{q}),
 $$
 and
$$
 \sum_{g \in I_{\fq}(f_{s})} 
\left(  \prod_{i=1}^{k}  \mu(g+h_{i})  \right)
= O_{d,k}(\sqrt{q}).
$$
  \end{enumerate}
\end{cor}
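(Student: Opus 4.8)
The plan is to transfer the $\fp$-proof of Theorems~\ref{thm:prime-independence}--\ref{thm:class-function-correlations} essentially verbatim to the coefficient field $\fq$, the only new ingredient being Theorem~\ref{thm:large-q-disjoint} in place of Proposition~\ref{prop:linear-disjoint-morse}. First I would define the exceptional set: put $S_{\mathrm{bad}} := S_{\mathrm{Morse}} \cup S_{\mathrm{Gal}}$, where $S_{\mathrm{Morse}} = \{ s \in \overline{\fp} : f_s = f_0 + sx \text{ is not Morse}\}$ and $S_{\mathrm{Gal}}$ is the set of $s$ excluded by Theorem~\ref{thm:large-q-disjoint}. Since $(p,d(d-1))=1$ forces $p$ odd and $p\nmid d$, we have $(q,2d)=1$ and $f_0''\neq 0$ (its leading coefficient $d(d-1)$ is a unit), so Proposition~\ref{prop:Morse-dense} gives $|S_{\mathrm{Morse}}| = O_d(1)$ and Theorem~\ref{thm:large-q-disjoint} gives $|S_{\mathrm{Gal}}| = O_{d,k}(1)$. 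Both sets are cut out by the vanishing of finitely many fixed polynomials in $s$ whose coefficients lie in $\fp(\text{coeffs of }f_0, h_1,\dots,h_k)$, hence are independent of which admissible $\fq$ one works over; thus $S_{\mathrm{bad}}\subset\overline{\fp}$ is a single finite set of size $O_{d,k}(1)$, which is property (1).

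Next, fix $q=p^l$ with $f_0\in\fq[x]$ and $h_1,\dots,h_k\in\fq$, and take $s\in\fq\setminus S_{\mathrm{bad}}$, so $f_s\in M_d(\fq)$ is Morse. By Proposition~\ref{prop:Morse-Sn} (applicable since $(q,2d)=1$) each $G_{h_i} = \gal(L_{h_i}/\fq(t)) = S_d$, with geometric part also $S_d$ (the Morse remark following Proposition~\ref{prop:all-possible-densities}), so the constant field $l$ of each $L_{h_i}$ equals $\fq$. By Theorem~\ref{thm:large-q-disjoint}, $G^k = \gal(L^k/\fq(t)) = S_d^k$, and — exactly as in the $\fp$-proof of Theorem~\ref{thm:class-function-correlations}, where Proposition~\ref{prop:linear-disjoint-morse} is invoked to conclude $G^k = G^k_{\text{geom}} = S_d^k$ — the geometric part is the full group, so $G^k = G^k_{\text{geom}} = S_d^k$ and $l^k = \fq$.

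With these inputs the Corollary follows from the Chebotarev machinery of Section~\ref{sec:preliminary-results} read with $q$ in place of $p$. The single-shift assertions of Theorems~\ref{thm:prime-independence}--\ref{thm:class-function-correlations} come from Proposition~\ref{prop:all-possible-densities} together with $G_0 = G_{0,\text{geom}} = S_d$, giving $\sum_{g\in I_\fq(f_s)}\phi(g) = c(\phi)q + O_d(\sqrt q)$ with $c(\phi) = \frac{1}{|S_d|}\sum_{\sigma\in S_d}\phi(\sigma)$. The $k$-fold correlations come from Proposition~\ref{prop:uncorrelated-class-average}: since $\gamma|_{l^k}=\frob_q$ and $l^k=\fq$ we have $\gamma\in G^k_{\text{geom}} = S_d^k$, so $\gamma G^k_{\text{geom}} = S_d^k$ and the constant collapses to $\frac{1}{|S_d^k|}\sum_{\sigma\in S_d^k}\prod_i\phi_i(\sigma_i) = \prod_i c(\phi_i)$. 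Specializing $\phi_i = 1_{\text{Prime}}$ (a class function equal to $1_{d\text{-cycle}}$, with constant $1/d$) and $\phi_i = \mu$ (with constant $0$), as in Sections~\ref{sec:proof} and \ref{sec:mobius-chowla-type}, yields the two displayed formulas, and the general statements of Theorems~\ref{thm:prime-independence}, \ref{thm:moebius-chowla-cancellation} and \ref{thm:class-function-correlations} for $I_\fq(f_s)$ follow in the same way.

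The main obstacle I anticipate is uniformity in $l$. First, the implied constants in the $O_{d,k}(\sqrt q)$ error terms must not grow with $l$; by the Chebotarev bound this reduces to bounding the genus $g_{L^k}$ in terms of $d,k$ alone, which holds because $L^k$ is the compositum of the $k$ splitting fields of the degree-$d$ polynomials $f_s(x)+h_i+t$ over $\fq(t)$, each tamely ramified (as $p\nmid d$ and $p$ is odd) only over $\infty$ and the $d-1$ simple critical values of $f_s$, so a Riemann--Hurwitz/conductor estimate bounds $g_{L^k}$ by a function of $d$ and $k$. Second, one should double-check that the geometric group is genuinely all of $S_d^k$ and not an index-two subgroup (which would change the $1/d^k$ in the prime $k$-tuple count): here one uses that for $s\notin S_{\mathrm{bad}}$ the critical-value translates $R_{f_s}+h_1,\dots,R_{f_s}+h_k$ are pairwise disjoint, so for every nonempty $J$ the product $\prod_{j\in J}\operatorname{disc}_x(f_s(x)+h_j+t) = \pm d^{|J|}\prod_{j\in J}\prod_{r\in R_{f_s}}(t+r+h_j)$ is squarefree of positive degree, hence not a constant times a square in $\fq(t)$, ruling out any nontrivial constant field extension inside $L^k$.
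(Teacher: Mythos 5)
Your proposal is correct and takes essentially the same route as the paper, which treats the corollary as immediate from Theorem~\ref{thm:large-q-disjoint} (for $s$ outside the zero set of a fixed, $q$-independent polynomial, $f_s$ is Morse and $G^{k}=G^{k}_{\text{geom}}=S_{d}^{k}$) combined with the same Chebotarev machinery of Propositions~\ref{prop:all-possible-densities} and \ref{prop:uncorrelated-class-average}, read with $q$ in place of $p$, and then specialized to $1_{\text{Prime}}$ and $\mu$. Your extra checks (uniformity of the genus bound in $l$, and ruling out a constant quadratic subextension so that the geometric group is all of $S_{d}^{k}$) are details the paper leaves implicit but are consistent with its argument.
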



As for the proof of Theorem~\ref{thm:large-q-disjoint}, we first show
that critical values having constant difference is a rare occurrence.
\begin{prop}\label{prop:fs-is-lindisjoint}
  Let $f\in M_{d}(\fq)$
where $q=p^{l}$, and assume that
        $p \nmid d(d-1)$.  With $s$ transcendental over
        $\overline{\fp}$, denote  $f_s(x):=f(x)+sx$, and let
        $\tau_1,\tau_2$ be distinct roots of $f_s'(x)=f'(x)+s=0$. Then
        $f_s(\tau_1)-f_s(\tau_2)\notin\overline{\fp}$. 
\end{prop}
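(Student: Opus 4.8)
The plan is to argue by contradiction: suppose $f_s(\tau_1) - f_s(\tau_2) = c$ for some $c \in \overline{\fp}$, and show this forces a polynomial identity in $s$ that cannot hold. First I would set up the algebra carefully. Since $s$ is transcendental over $\overline{\fp}$, the roots $\tau_1, \tau_2$ of $f'(x) + s = 0$ live in the algebraic closure of $\overline{\fp}(s)$; writing $f'(x) = a_{d-1} x^{d-1} + \cdots$ with $a_{d-1} = d \ne 0$ in $\overline{\fp}$ (using $p \nmid d$), the equation $f'(x) = -s$ exhibits $x$ as an algebraic function of $s$, and conversely $s = -f'(x)$ expresses $s$ as a polynomial in $x = \tau_i$. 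This is the key move: rather than think of $\tau_i$ as functions of $s$, I would use $\tau_1$ itself as a transcendental parameter over $\overline{\fp}$, set $s := -f'(\tau_1)$, and then $\tau_2$ is one of the other roots of $f'(x) + s = f'(x) - f'(\tau_1) = 0$, i.e. $\tau_2$ is an algebraic function of $\tau_1$.

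Next I would analyze the hypothesized relation $f_s(\tau_1) - f_s(\tau_2) = c$, i.e.
\[
f(\tau_1) - f(\tau_2) + s(\tau_1 - \tau_2) = c,
\]
and substitute $s = -f'(\tau_1)$:
\[
f(\tau_1) - f(\tau_2) - f'(\tau_1)(\tau_1 - \tau_2) = c.
\]
Differentiating with respect to $\tau_1$ (a legitimate operation since everything is now an algebraic function of the single transcendental $\tau_1$; one must use the Hasse--Schmidt/formal derivative in characteristic $p$, and $p \nmid d-1$ guarantees $f''(\tau_1) \ne 0$ generically so that $\tau_2$ is a genuinely non-constant, separable function of $\tau_1$) and using $f'(\tau_2) = f'(\tau_1) = -s$, most terms telescope: the derivative of $f(\tau_2)$ is $f'(\tau_2)\tau_2' = -s\,\tau_2'$, the derivative of $f'(\tau_1)(\tau_1 - \tau_2)$ contributes $f''(\tau_1)(\tau_1-\tau_2) + f'(\tau_1)(1 - \tau_2')$, and one is left with
\[
-f''(\tau_1)\,(\tau_1 - \tau_2) = 0.
\]
Since $f''(\tau_1) \ne 0$ and $\tau_1 \ne \tau_2$ (distinct roots of the separable polynomial $f'(x)+s$, using $p \nmid d-1$ so $f'$ is separable for $s$ transcendental), this is a contradiction, completing the proof.

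The main obstacle I anticipate is the characteristic-$p$ differentiation step: one must be careful that $\tau_2$ really is a separable, non-constant algebraic function of $\tau_1$ so that formal differentiation behaves as expected, and that $f''(\tau_1)$ does not vanish identically — both of which are exactly where the hypotheses $p \nmid d$ and $p \nmid (d-1)$ enter (the first ensures $\deg f' = d-1$ so there are enough critical points and $f'$ has the expected degree; the second ensures $f''$ is not identically zero and $f'$ is separable). An alternative, perhaps cleaner, route avoiding derivatives would be purely algebraic: treat $\tau_1, \tau_2$ symmetrically as the two chosen roots of $f'(x) + s$, eliminate $s$ via resultants, and show that the ideal generated by $f'(\tau_1)+s$, $f'(\tau_2)+s$, and $f(\tau_1)-f(\tau_2)+s(\tau_1-\tau_2)-c$ in $\overline{\fp}[s,\tau_1,\tau_2]$ is the unit ideal unless $\tau_1 = \tau_2$; I would fall back on this if the function-theoretic argument gets delicate, but I expect the differentiation argument above to be the shortest path.
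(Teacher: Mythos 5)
Your proof is correct, and it takes a genuinely different route from the paper's. The paper proceeds field-theoretically: setting $\mathbb{E} = \fq(c)$, it uses coprimality of the degrees $d-1 = [\mathbb{E}(\tau_i):\mathbb{E}(s)]$ and $d = [\mathbb{E}(\tau_i):\mathbb{E}(f_s(\tau_i))]$ to conclude $\mathbb{E}(\tau_i) = \mathbb{E}(s, f_s(\tau_i))$, whence $\mathbb{E}(\tau_1)=\mathbb{E}(\tau_2)$ and $\tau_2 = (A\tau_1+B)/(C\tau_1+D)$; it rules out $C\ne 0$ by a degree/transcendence argument, reducing to $\tau_2 = A\tau_1+B$; and then it derives a contradiction from the functional equation $h(x)-c = h(Ax+B)$ for $h = f - xf'$ by observing that the multiset of critical values $R_h$ would carry a nontrivial $\fp$-translation action, forcing $p \mid \deg(h') = d-1$. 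Your argument instead works directly in the function field $\overline{\fp}(\tau_1,\tau_2)$ with the derivation $D = d/d\tau_1$: once one checks that $f'(x)+s$ is separable for transcendental $s$ (so that $f''(\tau_i)\ne 0$, $\tau_2$ is separable over $\overline{\fp}(\tau_1)$, and $D$ extends), differentiating the hypothesized identity $f(\tau_1)-f(\tau_2)-f'(\tau_1)(\tau_1-\tau_2)=c$ and using $f'(\tau_1)=f'(\tau_2)$ collapses everything to $-f''(\tau_1)(\tau_1-\tau_2)=0$, a contradiction. This is shorter and more transparent, handles $c=0$ and $c\ne 0$ uniformly, and bypasses the Möbius-transformation and critical-value bookkeeping entirely. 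One small remark: you don't actually need the Hasse--Schmidt derivative here --- the ordinary derivation of $\overline{\fp}(\tau_1)$ extends uniquely to the separable extension $\overline{\fp}(\tau_1,\tau_2)$, and separability is exactly what the hypothesis $p\nmid d(d-1)$ (via the nonvanishing of $f''$ and hence of $\mathrm{disc}(f'+s)$) buys you, so the only point requiring care is the one you flagged.
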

\begin{proof}
	Assume by contradiction that
        $c = f_s(\tau_1)-f_s(\tau_2)  \in\overline{\fp}$,  
        and define
        $\mathbb{E}=\fq(c)$. Now, $f_s(x)=f(x)+sx$ and $f_s'(x)=f'(x)+s$
        are irreducible polynomials over $\mathbb{E}(s)$, so for both
        $i=1,2$, $[\mathbb{E}(\tau_i):\mathbb{E}(s)]=d-1$, and
        $[\mathbb{E}(\tau_i):\mathbb{E}(f_s(\tau_i))]=d$. Since the
        degrees of both extensions are co-prime we find that
        $\mathbb{E}(\tau_i)=\mathbb{E}(s,f_s(\tau_i))$, and thus, since
        we assume that $f_s(\tau_1)=f_s(\tau_2)+c$, we find that
        $\mathbb{E}(\tau_1)=\mathbb{E}(\tau_2)$. This implies that
        there exist $A,B,C,D\in \mathbb{E}$ such that
        $\tau_2=\frac{A\tau_1+B}{C\tau_1+D}$. We claim that $C=0$,
        otherwise (note that $f'(\tau_i)=-s$ for $i=1,2$) 
	\begin{multline}
	f(\tau_1)-f'(\tau_1)\tau_1=f(\tau_1)+s\tau_1=f_s(\tau_1)=f_s(\tau_2)+c=f(\tau_2)+s\tau_2+c=\\
	f(\tau_2)-f'(\tau_2)\tau_2 + c =f(\frac{A\tau_1+B}{C\tau_1+D})-\frac{A\tau_1+B}{C\tau_1+D}f'(\frac{A\tau_1+B}{C\tau_1+D}) +c,
	\end{multline}
        and after clearing denominators we find that $\tau_1$ is a
        root of a polynomial of degree $2d$ (here we use
        $p \nmid d-1$ so that $\deg(f(x)-xf'(x))=d$), with
        coefficients in $\overline{\fp}$, 
        contradicting that $\tau_1$ is transcendental.  Therefore
        $C=0$, and thus $\tau_2=A\tau_1+B$ for some
        $A,B\in \mathbb{E}$. Denote $h(x)=f(x)-xf'(x)$. Then
        $h(x)-c=h(Ax+B)$.  Let $R_h$ be the multiset of critical
        values of $h$, and $A_h$ the set of critical points of
        $h$. For any $a\in A_h$, $(a-B)/A$ is a critical point of
        $h(Ax+B)$, and $h(a)$ is a critical value of
        $h(Ax+B)$. Therefore $R_h$ is the multiset of critical values
        also for $h(Ax+B)$. On the other hand, by the equality
        $h(x)-c=h(Ax+B)$, we find that $R_h=R_h-c$.  By
        \cite{JarRaz00} (cf. Claim D' in the proof of
        Proposition~4.3), critical values are distinct and hence $c
        \neq 0$.
        We thus find that there exists a nontrivial $\fp$-action on the multiset
        $R_h$, and therefore $p$ divides the multiset cardinality of
        $R_{h}$, i.e., $p$ divides $\deg(h')=d-1$, contradicting the
        assumption that $p \nmid d-1$.
      \end{proof}
      
      \begin{cor}
        \label{cor:no-ramification-large-q}
  For 
  $f \in M_{d}(\fq)$ such that $(q,d(d-1))=1$, and any set of $k$
  distinct elements 
  $H=\{h_1,\dots,h_k \}\subset\fq$, the set
  $B(f_s)\cap (H-H)$ is empty for all but $O_{d,k}(1)$ values of $s$,
  where $B(f_{s}) = (R_{f_{s}}-R_{f_{s}}) \setminus \{ 0 \}$.
\end{cor}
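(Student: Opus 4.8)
The plan is to fix a single nonzero difference $h \in (H-H)\setminus\{0\}$ and show that $h \in B(f_s)$ for at most $O_d(1)$ values of $s \in \fq$. Since $0 \notin B(f_s)$ for every $s$, any $s$ with $B(f_s)\cap (H-H) \ne \emptyset$ must have $h \in B(f_s)$ for one of the at most $k(k-1)$ nonzero elements $h$ of $H-H$, so a union bound over these $h$ will give $|\{s\in\fq : B(f_s)\cap(H-H)\ne\emptyset\}| = O_{d,k}(1)$, which is the assertion.

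To handle a fixed $h$, I would encode the set of critical values of $f_s$ by a single polynomial in $s$. Since $p\nmid d$, the derivative $f_s'(x)=f'(x)+s$ has degree exactly $d-1$ with leading coefficient $d\in\overline{\fp}^\times$, and I set
$$
\Phi_s(y) := \operatorname{Res}_x\bigl(f'(x)+s,\ y-f_s(x)\bigr) = d^{\,d}\prod_{i=1}^{d-1}\bigl(y-f_s(\tau_i)\bigr),
$$
where $\tau_1,\dots,\tau_{d-1}$ are the roots of $f_s'$. Thus $\Phi_s(y)$ has degree $d-1$ in $y$ with nonzero leading coefficient $d^{\,d}$, its coefficients lie in $\fq[s]$ and have $s$-degree $O_d(1)$, and its set of roots is exactly $R_{f_s}$. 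For $h\ne 0$ one then has the equivalences: $h\in B(f_s)=(R_{f_s}-R_{f_s})\setminus\{0\}$ iff $\Phi_s(y)$ and $\Phi_s(y+h)$ share a root (immediate from $R_{f_s}$ being the root set of $\Phi_s$, using $h\ne 0$), and the latter holds iff $\rho_h(s) := \operatorname{Res}_y\bigl(\Phi_s(y),\,\Phi_s(y+h)\bigr)=0$ (because $\Phi_s(y)$ and $\Phi_s(y+h)$ both have the invertible leading coefficient $d^{\,d}$). For fixed $h\in\fq$, $\rho_h$ is a polynomial in $s$ over $\fq$ with $\deg_s\rho_h=O_d(1)$.

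It then only remains to check that $\rho_h$ is not the zero polynomial --- and this is exactly the point at which Proposition~\ref{prop:fs-is-lindisjoint} is used. If $\rho_h\equiv 0$, then taking $s$ transcendental over $\overline{\fp}$ the polynomials $\Phi_s(y)$ and $\Phi_s(y+h)$ would share a root, producing roots $\tau_i,\tau_j$ of $f_s'$ with $f_s(\tau_i)-f_s(\tau_j)=h$; since $h\in\fq\subset\overline{\fp}$ is nonzero this forces $\tau_i\ne\tau_j$ and $f_s(\tau_i)-f_s(\tau_j)\in\overline{\fp}$, contradicting Proposition~\ref{prop:fs-is-lindisjoint}. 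Hence $\rho_h\in\fq[s]\setminus\{0\}$ has at most $\deg_s\rho_h=O_d(1)$ roots in $\overline{\fq}$, so at most $O_d(1)$ roots in $\fq$; equivalently $h\in B(f_s)$ for at most $O_d(1)$ values of $s\in\fq$. Summing over the nonzero $h\in H-H$ completes the argument.

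The only genuine content is the appeal to Proposition~\ref{prop:fs-is-lindisjoint}; everything else is bookkeeping. The mild care needed is (i) to note that the common-root/resultant formulation makes the equivalence ``$h\in B(f_s)\Leftrightarrow\rho_h(s)=0$'' valid with no separability or Morseness assumption on the specialized $f_s$ (which is why I phrase things via $\Phi_s$ rather than, say, via a squarefree discriminant), and (ii) to verify that $\rho_h$, obtained by eliminating first $x$ and then $y$, really is a nonzero element of $\fq[s]$ of degree bounded purely in terms of $d$ --- a routine estimate on the sizes of the two Sylvester determinants involved. I do not expect any serious obstacle beyond invoking Proposition~\ref{prop:fs-is-lindisjoint}.
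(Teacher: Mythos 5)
Your proof is correct and takes essentially the same approach as the paper: both reduce the claim to the nonvanishing of a polynomial in $s$ of degree $O_{d,k}(1)$, with Proposition~\ref{prop:fs-is-lindisjoint} supplying the nonvanishing. The only real difference is presentation --- the paper writes the relevant polynomial directly as $\Pi(s)=\prod_{h_i\neq h_j}\prod_{\tau_i\neq\tau_j}\bigl(f_s(\tau_i)-f_s(\tau_j)-(h_i-h_j)\bigr)$ and invokes symmetry in the roots of $f_s'$ to see it lies in $\fq[s]$, whereas you encode the same data via an iterated resultant $\rho_h(s)$ and take a union bound over nonzero $h\in H-H$.
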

\begin{proof}
  By Proposition \ref{prop:fs-is-lindisjoint}, for $s$ transcendental
  over $\fp$, $h_i\neq h_j$, and
  $\tau_i\neq\tau_j$ denoting any two distinct roots of $f_s'(x)$,
	$$f_s(\tau_i)-f_s(\tau_j)-(h_i-h_j)\neq0
	$$ 
	Let
	\[
	\Pi(s):=\prod_{h_i\neq h_j}\prod_{\tau_i\neq\tau_j}(f_s(\tau_i)-f_s(\tau_j)-(h_i-h_j))
	\]
	Then $\Pi(s)\neq0$, and as $\Pi(s)$ is a symmetric polynomial in
        the roots of $f'_{s}(x)=0$,  it is a polynomial
        in $s$, of degree bounded in terms of $d$ and $k$. Since
$
B(f_{s})\cap (H-H)\neq\emptyset
$
is equivalent to 
$\Pi(s)=0$, the result follows.  
\end{proof}

Theorem~\ref{thm:large-q-disjoint} now follows easily as the
extensions $L_{1}, \ldots, L_{k}$ are linearly disjoint by
Corollary~\ref{cor:no-ramification-large-q}.

\bibliographystyle{abbrv}

\end{document}